\numberwithin{equation}{section}
\newtheorem{lemma}{Lemma}[section] 
\newtheorem{theorem}[lemma]{Theorem}
\newtheorem{definition}[lemma]{Definition} 
\newtheorem{corollary}[lemma]{Corollary} 
\newtheorem{proposition}[lemma]{Proposition} 
\newtheorem{remark}[lemma]{Remark}
\newtheorem*{rep@theorem}{\rep@title}
\newcommand{\newreptheorem}[2]{%
\newenvironment{rep#1}[1]{%
 \def\rep@title{#2 \ref{##1}}%
 \begin{rep@theorem}}%
 {\end{rep@theorem}}}
\newcommand{\R}{\mathbb{R}} 
\def \Rn{{\R^n}}
\newcommand{\N}{\mathbb{N}}
\def \s{\mathbb{S}^{n-1}}
\def \S{\mathbb{S}^{n\hid-1}}
\newcommand \B[1][n]{B_2^{#1}}
\newcommand{\vol}[1][n]{\operatorname{vol}_{#1}}
\newcommand{\Vol}{\operatorname{vol}_{nm}}
\def \s{\mathbb{S}^{n-1}}
\def \S{\mathbb{S}^{nm-1}}
\newcommand {\conbod}[1][nm] {\mathcal{K}^{#1}}
\newcommand {\conbodo}[1][nm] {\mathcal{K}^{#1}_o}
\newcommand {\conbodio}[1][nm] {\mathcal{K}^{#1}_{(o)}}
\DeclareMathOperator{\SL}{SL}
\DeclareMathOperator{\GL}{GL}
\renewcommand{\P}[1][Q]{\Pi_{#1,p}}
\newcommand{\PP}[1][Q]{\Pi_{#1,p}^{\circ}}
\newcommand{\G}[1][Q]{\Gamma_{#1,p}}
\newcommand {\LYZ}[1]{\P #1}
\newcommand {\LYZP}[1]{\PP #1}
\title[On the $m$th-order Affine {P}\'olya-{S}zeg\"o Principle]{On the $m$th-order Affine {P}\'olya-{S}zeg\"o Principle}
\author[D. Langharst]{Dylan Langharst}
\address{Institut de Math\'ematiques de Jussieu, Sorbonne Universit\'e, Paris, 75252, France}
\email{dylan.langharst@imj-prg.fr}
\author[M. Roysdon]{Michael Roysdon}
\address{Department of Mathematics, Applied Mathematics, and Statistics, Case Western Reserve University, Cleveland, OH 44106, USA}
\email{mar327@case.edu}
\author[Y. Zhao]{Yiming Zhao}
\address{Department of Mathematics, Syracuse University, Syracuse, NY 13244, USA}
\email{yzhao197@syr.edu}
\subjclass{Primary: 46E30, 46E35 Secondary: 52A20}
\keywords{Sobolev-type inequalities, affine Sobolev inequality,{P}\'olya-{S}zeg\"o principle, projection body, Petty projection inequality}
\thanks{D.L. is supported by the Fondation Sciences Math\'ematiques de Paris Postdoctoral program}
\thanks{M.R. is supported by an AMS-Simons Travel Grant}
\thanks {Y.Z. is supported, in part, by U.S. National Science Foundation, Grant DMS-2337630.}
\begin{document}
\begin{abstract}
An affine {P}\'olya-{S}zeg\"o principle for a family of affine energies, with equality condition characterization, is demonstrated. In particular, this recovers, as special cases, the $L^p$ affine {P}\'olya-{S}zeg\"o principles due to Cianchi, Lutwak, Yang and Zhang, and subsequently Haberl, Schuster and Xiao. Various applications of this new {P}\'olya-{S}zeg\"o principle are shown.
\end{abstract}

\maketitle

\section{Introduction}

The sharp $L^p$ Sobolev inequality states that for $p\in (1,n)$ and $f\in W^{1,p}(\Rn)$, we have
\begin{equation}
	\label{eq:lp_sobolev}
a_{n,p}\|f\|_{L^{\frac{np}{n-p}}(\Rn)}\leq \|\nabla f\|_{L^p(\Rn)},
\end{equation}
where $\|\cdot\|_{L^q(\Rn)}$ stands for the usual $L^q$ norm of a function. The constant $a_{n,p}$ is the best possible and can be explicitly computed, see \eqref{eq:sobolev_cons} below.  The Sobolev inequality has a wide range of applications across all areas of mathematics, particularly in the study of PDEs. In this sharp form, it can be found in Federer and Fleming \cite{FF60}, Fleming and 
Rishil \cite{MR114892}, Maz'ya \cite{VGM60} for $p=1$, and Aubin \cite{Aubin1} and Talenti \cite{Talenti1} for $p\in (1,n)$. The Sobolev inequality is arguably the most fundamental inequality connecting analysis and geometry. Indeed, the geometric core behind \eqref{eq:lp_sobolev}, for all $p\in (1,n)$, is the classical isoperimetric inequality:
\begin{equation}
	n\omega_n^{\frac{1}{n}}\text{volume} (E)^{\frac{n-1}{n}}\leq \text{surface area}(E),
\end{equation}
where $\omega_n$ is the volume of the $n$-dimensional unit ball. In fact, the two inequalities are equivalent; see, \emph{e.g.,} Gardner \cite{G20}. Moreover, the standard approaches to both inequalities are perfectly parallel. A critical ingredient for the proof of the Sobolev inequality is the \emph{{P}\'olya-{S}zeg\"o principle} stating that the $L^p$ Dirichlet energy $\|\nabla f\|_{L^p(\Rn)}$ is non-increasing if $f$ is replaced by its \emph{spherically symmetric arrangement} whereas the classical isoperimetric inequality can be shown by establishing the monotonicity of the surface area with respect to \emph{Steiner symmetrization}.  

By now, there is a vast library of literature on Sobolev inequalities and {P}\'olya-{S}zeg\"o principles. We refer the readers to the survey \cite{MR3503198} by Talenti, a few recent contributions \cite{BGGK24,MR2393140,CENV04,HJM16,LYZ06} and the references therein.

By convex body, we mean a compact convex set in $\R^n$ with non-empty interior. The \emph{projection body} $\Pi K$ of a convex body $K$ is an important and natural (see, Ludwig \cite{ML02}) object in affine geometry. As the name suggests, the projection body encodes information about the area of the image of the orthogonal projection of $K$ onto $(n-1)$-dimensional subspaces. Crucially, the fact that the projection body operator is $\SL(n)$-contravariant, or $\Pi\phi K= \phi^{-t}\Pi K$ for all $\phi\in \SL(n)$, makes its volume and the volume of its polar body fundamental \emph{affine} invariants. Recalling that, when a convex body $K$ contains the origin in its interior, its polar body $K^\circ$ is well-defined, the \emph{polar projection body} of a convex body $K$ is precisely $\Pi^\circ K :=(\Pi K)^\circ$. Then, the celebrated \emph{Petty projection inequality} states that for every convex body $K$ in $\Rn$, we have, denoting by $\vol$ the Lebesgue measure on $\Rn$,
\begin{equation}\label{eq 8.25.1}
	\vol(\Pi^\circ K) \vol(K)^{n-1}\leq \vol(\Pi^\circ \B) \vol(\B)^{n-1},
\end{equation}
where $\B$ is the centered unit ball in $\Rn$ and equality holds if and only if $K$ is an ellipsoid. By Cauchy's area formula and a simple application of H\"older's inequality, it can be seen that the Petty projection inequality \emph{trivially} implies the classical isoperimetric inequality. Quoting Gardner and Zhang \cite{GZ98}:``the Petty projection inequality is a dramatic improvement upon the classical isoperimetric inequality.'' Part of the dramatic improvements of Petty's inequality over its classical counterpart is that \eqref{eq 8.25.1} is \emph{affine invariant}, or, more precisely, changing $K$ into $\phi K$ for any $\phi\in \SL(n)$ will not change the value of the volume product $\vol(\Pi^\circ K) \vol(K)^{n-1}$. It is worth pointing out that the inequality involving the volume ratio $\vol(\Pi K) \vol(K)^{-n+1}$, $n \geq 3$, known as \emph{the Petty conjecture}, is still open \cite{CMP71}. 

In a landmark work \cite{GZ99}, Zhang, using Minkowski's existence theorem, formulated and derived from the Petty projection inequality, an \emph{affine} Sobolev inequality that is stronger than \eqref{eq:lp_sobolev}: 
\begin{equation}\label{eq 8.25.4}
	a_{n,1} \|f\|_{L^{\frac{n}{n-1}}(\Rn)}\leq \frac{(n\omega_n)^\frac{n+1}{n}}{2\omega_{n-1}} \left(\int_{\s}\left(\int_{\Rn} |(\nabla f(x))^t u| dx\right)^{-n}du\right)^{-\frac{1}{n}}: = \mathcal{E}_1(f),
\end{equation} 
where equality holds when $f$ is the characteristic function of an ellipsoid. Zhang \cite{GZ99} originally stated \eqref{eq 8.25.4} for $C^1$ functions with compact support. However, the inequality holds in general for functions of bounded variation as shown by Wang \cite{TW12}. We emphasize again that not only is \eqref{eq 8.25.4} sharp, but the energy functional $\mathcal{E}_1(f)$ is affine invariant in the sense that $\mathcal{E}_1(f)=\mathcal{E}_1(f\circ \phi)$ for $\phi\in \SL(n)$. Moreover $\mathcal{E}_1(f)\leq \|\nabla f\|_{L^1(\Rn)}$ and therefore, we may view $\mathcal{E}_1(f)$ as the \emph{affine} analog of the classical Dirichlet energy. 

In fact, Lutwak, Yang, and Zhang \cite{LYZ02} showed that \eqref{eq 8.25.4} is only one of a family of sharp $L^p$ affine Sobolev inequality: for $p\in [1,n)$,
\begin{equation}\label{eq 8.25.5}
	a_{n,p} \|f\|_{L^{\frac{np}{n-p}}(\Rn)}\leq \mathcal{E}_p(f),
\end{equation}
each of which is stronger than its classical counterpart \eqref{eq:lp_sobolev}. Here, the precise formulation of $\mathcal{E}_p(f)$ can be found in \cite{LYZ02}, or, by substituting $m=1$ and $Q=[-1/2,1/2]$ in \eqref{eq:m_lp_affine_sobolev}. Curiously, while the geometric core behind the $L^p$ Sobolev inequality for any $p\in [1,n)$ is the same (the isoperimetric inequality), the geometric core behind \eqref{eq 8.25.5} is an $L^p$ version of the Petty projection inequality (different for each $p$) established in Lutwak, Yang, and Zhang \cite{LYZ00}. An asymmetric (and even stronger) version of \eqref{eq 8.25.5} is due to Haberl and Schuster \cite{HS2009, HS09}. The proofs invariably use various elements from the booming $L^p$ Brunn-Minkowski theory initiated by Lutwak \cite{LE93,LE96} in the 1990s, including the $L^p$ Minkowski inequality and the $L^p$ Minkowski problem, see, \emph{e.g.}, \cite{CW06,HLYZ05, LYZ04}.

As in the Euclidean case, an affine {P}\'olya-{S}zeg\"o principle was established by Lutwak, Yang, and Zhang \cite{LYZ02} and Cianchi, Lutwak, Yang, and Zhang \cite{CLYZ09} for the $L^p$ affine energy: for $p\geq 1$,
\begin{equation}\label{eq 8.25.6}
	\mathcal{E}_p(f)\geq \mathcal{E}_p(f^\star),
\end{equation}
where $f^\star$ is the spherically symmetric rearrangement of $f$. See Section \ref{sec:back} for its precise definition. An asymmetric (and even stronger) version of \eqref{eq 8.25.5} is due to Haberl, Schuster, and Xiao \cite{HSX12}. An equality condition characterization for $p>1$ was found in Nguyen \cite{NVH16}.

Recently, the first two named authors and their collaborators \cite{HLPRY23,HLPRY23_2} established a higher-order version of the sharp $L^p$ affine Sobolev inequality \eqref{eq 8.25.6}. In particular, their work encompasses \cite{HS2009,LYZ02,GZ99} as special cases. While not explicitly stated, it will become clear in the current work that each inequality in this family is stronger than its classical counterpart.

One of the goals of the current work is to show the validity of the accompanying higher-order affine {P}\'olya-{S}zeg\"o principle along with its equality condition characterization.

A convex body $K$ is uniquely determined by its support function $h_K:\s\rightarrow \R$ given by $h_K(u) = \max_{x\in K} x^t u$. The \emph{covariogram function} of $K$ is given by 
\begin{equation}
	g_K(x) = \vol(K\cap (K+x)).
\end{equation}
The projection body is closely connected to the covariogram of a convex body, see Matheron \cite{MA}: for $\theta\in \s$,
\begin{equation}\label{eq 8.25.7}
	\left.\frac{d}{dr}\right|_{r=0^+} g_K(r\theta) = - \vol[n-1] (P_{\theta^{\perp}}K) = -h_{\Pi K}(u),
\end{equation}
where $P_{\theta^{\perp}}K$ is the image of the orthogonal projection of $K$ onto the $(n-1)$-dimensional subspace $\theta^\perp$ consisting of vectors perpendicular to $\theta$. 

Let $m\in \mathbb{N}$. For $x\in \R^{nm}$, we will write $x= (x_1,\dots, x_m)$ where $x_i\in \Rn$. From time to time, we shall identify $\mathbb{R}^{nm}$ with the space of $n$ by $m$ matrices. Consequently, as an example,  the notation $u^tx$ for $u\in \Rn$ and $x\in \mathbb{R}^{nm}$ is simply the usual matrix multiplication between a $1$ by $n$ row vector $u^t$ and an $n$ by $m$ matrix $x$ and the product yields a $1$ by $m$ row vector.

The motivations of \cite{HLPRY23, HLPRY23_2} stem from the  investigation of Schneider \cite{Sch70} of the the following natural generalization of the covariogram function: for each $m\in \mathbb{N}$, the \emph{$m$th-covariogram function} of a convex body $K$ in $\R^n$, is given by
\begin{equation}
	g_{K,m}(x) = \vol\left(K\cap \bigcap_{i=1}^m(K+x_i)\right),
\end{equation} 
for $x=(x_1,\dots, x_m)\in \mathbb{R}^{nm}$. This \emph{seemingly} innocent generalization turns out to lead to unexpected things. We will now describe one conjecture following this. Since the support of the classical covariogram $g_K$ gives an alternative definition for the difference body $DK= K+(-K)$, the support of $g_{K,m}$, denoted by $D^m(K)$, naturally generalizes the classical difference body. As was noted by Schneider, the set $D^m(K)$ is a convex body in $\R^{nm}$, and the volume ratio
\begin{equation}\label{eq 8.25.9}
	\vol[nm](D^m K)\vol[n](K)^{-m}
\end{equation}
is $\SL(n)$-invariant. It was shown in \cite{Sch70} that \eqref{eq 8.25.9} is maximized by simplices, which in the case of $m=1$ recovers the celebrated difference body inequality of Rogers and Shephard \cite{RS57}, a certain reverse form of the celebrated Brunn-Minkowski inequality \cite{G20}.  From this point of view, 
finding the \emph{sharp} lower bound of \eqref{eq 8.25.9} may be viewed as a higher-order analog of the Brunn-Minkowski inequality. 
The conjecture is settled in dimension 2: Schneider showed that it is minimized by origin-symmetric $K$ when $n =2$ for all $m \in \N$. The case for arbitrary $n$ when $m=1$ is the Brunn-Minkowski inequality for the difference body. The minimizer of \eqref{eq 8.25.9} in the case $n \geq 3, m \geq 2$ remains open and is conjectured to be attained by ellipsoids. With this point of view in mind, it seems apt to refer to Schneider's conjecture as the {\it mth-order Brunn-Minkowski conjecture}.  As observed in \cite{Sch20}, this conjecture is intimately connected to Petty's conjecture mentioned above. With the tools developed in \cite{HLPRY23}, Haddad in \cite{JH23} confirmed a version of the  $m$th-order Brunn-Minkowski conjecture by replacing the volume with mean width. 
 
It was shown in \cite{HLPRY23} that $g_{K,m}$ is differentiable in each radial direction at the origin. Inspired by Matheron's formula \eqref{eq 8.25.7}, one naturally obtains, for each $m\in \mathbb{N}$, an $m$th-order projection body; this corresponds to the case when $Q$ is the orthogonal simplex in the following definition, introduced in \cite{HLPRY23_2}.
\begin{definition} \label{d:generalprojectionbody} Let $p \geq 1$, $m \in \N$, and fix a convex body $Q$ containing the origin in $\R^m$. Given a convex body $K$ containing the origin in its interior in $\R^n$, we define the $(L^p, Q)$-projection body of $K$, $\P K $, to be the convex body whose support function is given by 
\[
h_{\P K}(u)^p = \int_{\s} h_Q(v^tu)^p d\sigma_{K,p}(v)
\]
for $u\in\S$.
\end{definition}
Here $\sigma_{K,p}$ is the $L^p$ surface area measure of $K$, see Section \ref{sec:back} for its definition. It is simple to see that $\P$ is $\SL(n)$ ``contravariant''; that is, $\P \phi K = \overline{\phi^{-t}} \P K$ for each $\phi\in \SL(n)$, where $\overline{\phi^{-t}}(x_1,\dots, x_m)= (\phi^{-t} x_1, \dots, \phi^{-t} x_m)$. Moreover, when $m=1, Q=[-1/2,1/2]$, it recovers the classical $L_p$ projection body $\Pi_p K$ and when $m=1, Q=[0,1]$, it recovers the asymmetric $L^p$ projection body $\Pi_p^+K$ first considered in Lutwak \cite{LE96}. It was shown in \cite{HLPRY23_2}  that $\P K$ contains the origin as an interior point. Consequently, its polar body $(\P K)^\circ$ will simply be denoted by $\PP K$. 

In \cite{HLPRY23_2}, a brand new family of sharp affine isoperimetric inequality (containing the Petty projection inequality as a special case) was shown: for $p\geq 1$, $m\in \mathbb{N}$, and a convex body $K$ in $\Rn$ containing the origin in its interior,
\begin{equation}\label{eq 8.25.8}
	\Vol(\PP K) \vol(K)^{\frac{nm}{p}-m} \leq \Vol(\PP \B) \vol(\B)^{\frac{nm}{p}-m}.
\end{equation}
For $p=1$, equality holds if and only if $K$ is an ellipsoid. For $p>1$, equality holds if and only if $K$ is an origin-symmetric ellipsoid. Naturally, a new collection of sharp affine Sobolev inequality accompanies \cite{HLPRY23_2}: for $p\in [1,n)$, $m\in \mathbb{N}$, a convex body $Q$ in $\R^m$ containing the origin, and $f\in W^{1,p}(\R^n)$, we have
\begin{equation}
\label{eq:m_lp_affine_sobolev}
a_{n,p}\|f\|_{L^{\frac{np}{n-p}}(\Rn)}\leq d_{n,p}(Q)\left(\int_{\S} \left( \int_{\Rn} h_Q((\nabla f(z))^t\theta)^pdz \right)^{-\frac{nm} p } d\theta\right)^{-\frac 1 {nm}}:=\mathcal{E}_{p}(Q,f)
\end{equation}
where
\begin{equation}
\label{eq:sharp}
d_{n,p}(Q) := (n\omega_n)^\frac{1}{p}\left(nm\Vol(\PP \B )\right)^\frac 1 {nm},
\end{equation}
and
\begin{equation} \label{eq:sobolev_cons}
a_{n,p}=n^\frac{1}{p}\left(\frac{n-p}{p-1}\right)^\frac{p-1}{p}\left[\frac{\omega_n}{\Gamma(n)}\Gamma\left(\frac{n}{p}\right)\Gamma\left(n+1-\frac{n}{p}\right)\right]^\frac{1}{n}, \quad a_{n,1}=\lim_{p\to 1^+}a_{n,p},
\end{equation}
 
 We remark that the constant $d_{n,p}(Q)$ is so that the following comparison is valid.

\begin{theorem}
\label{t:relate}
	Let $p\geq 1,m\in\N,$ and $f\in W^{1,p}(\Rn)$. Then, for every convex body $Q$ in $\mathbb{R}^m$ containing the origin, one has
	\begin{equation}
 \label{eq:relate:}
		\begin{aligned}
			\mathcal{E}_{p}(Q,f) \leq \|\nabla f\|_{L^p(\Rn)},
		\end{aligned}
	\end{equation}
 with equality when $f$ is radially symmetric. For $p=1$, the inequality can be extended to $BV(\R^n)$. In this case, $\|\nabla f\|_{L^1(\Rn)}$ should be understood as $|Df|(\Rn)$, the total variation of $f$.
\end{theorem}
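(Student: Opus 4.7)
The plan is to exploit the $1$-homogeneity of $h_Q$ to decouple the magnitude and direction of $\nabla f$, reducing \eqref{eq:relate:} to comparing a single convex functional on probability measures on $\s$ evaluated at $\mu_f$ versus at the uniform measure.

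Assume $\nabla f \not\equiv 0$ and write $\nabla f(z) = |\nabla f(z)|\, n_f(z)$ with $n_f(z)\in\s$; let $\mu_f$ be the pushforward of the probability measure $|\nabla f(z)|^p dz/\|\nabla f\|_{L^p(\Rn)}^p$ on $\Rn$ under $n_f$. The $1$-homogeneity and nonnegativity of $h_Q$ give
\[
\int_{\Rn} h_Q((\nabla f(z))^t\theta)^p\, dz = \|\nabla f\|_{L^p(\Rn)}^p \int_{\s} h_Q(v^t\theta)^p\, d\mu_f(v),
\]
so by \eqref{eq:m_lp_affine_sobolev},
\[
\mathcal{E}_p(Q,f) = d_{n,p}(Q)\, \|\nabla f\|_{L^p(\Rn)}\, \Psi(\mu_f)^{-1/(nm)},
\]
where
\[
\Psi(\mu) := \int_{\S}\left(\int_{\s} h_Q(v^t\theta)^p\, d\mu(v)\right)^{-nm/p} d\theta.
\]
The theorem thus reduces to showing $\Psi(\mu_f) \geq d_{n,p}(Q)^{nm}$.

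I would then verify two structural properties of $\Psi$. First, $\Psi$ is convex on the set of Borel probability measures on $\s$: for each fixed $\theta$, $\mu\mapsto\int h_Q(v^t\theta)^p d\mu(v)$ is linear and $t\mapsto t^{-nm/p}$ is convex on $(0,\infty)$. Second, $\Psi$ is $O(n)$-invariant: if $\phi\in O(n)$, then $(\phi v)^t\theta = v^t(\phi^t\theta)$, and since $\theta\mapsto\phi^t\theta$ on $\mathbb{R}^{nm}\cong\mathbb{R}^{n\times m}$ is a linear isometry, the substitution $\theta' = \phi^t\theta$ leaves the outer integral unchanged. Averaging over normalized Haar measure on $O(n)$ and invoking Jensen's inequality gives
\[
\Psi\!\left(\int_{O(n)}\phi_*\mu_f\, d\phi\right) \leq \int_{O(n)}\Psi(\phi_*\mu_f)\, d\phi = \Psi(\mu_f).
\]
The averaged measure is $O(n)$-invariant and of unit mass, hence equals the normalized uniform measure $\sigma/(n\omega_n)$ on $\s$; this yields $\Psi(\sigma/(n\omega_n))\leq \Psi(\mu_f)$.

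To finish, I would evaluate $\Psi(\sigma/(n\omega_n))$ directly. Since $h_{\B}\equiv 1$, the $L^p$ surface area measure of the unit ball coincides with $\sigma$, so Definition~\ref{d:generalprojectionbody} gives $\int_{\s} h_Q(v^t\theta)^p\, d\sigma(v) = h_{\P[\B]}(\theta)^p$. Combined with the standard polar identity $nm\,\Vol(L^\circ) = \int_{\S} h_L(\theta)^{-nm}\, d\theta$, this yields
\[
\Psi(\sigma/(n\omega_n)) = (n\omega_n)^{nm/p}\, nm\,\Vol(\PP[\B]) = d_{n,p}(Q)^{nm},
\]
as desired. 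Equality for radially symmetric $f$ is automatic: rotational symmetry of $f$ forces $\mu_f$ to be $O(n)$-invariant, hence equal to $\sigma/(n\omega_n)$. For the $p=1$ extension to $BV(\Rn)$, one replaces $|\nabla f|\,dz$ by the total variation measure $|Df|$ and $n_f$ by the Radon--Nikodym direction of $Df$ with respect to $|Df|$; the decoupling and symmetrization transfer verbatim. The main conceptual step is recognizing the convex/symmetric structure of $\Psi$; the main technical wrinkle is checking that $\Psi(\mu_f)$ is finite and that $n_f$, $\mu_f$ are measure-theoretically well-posed, including the degenerate regime where $Q$ touches the origin and $h_Q$ may vanish on a positive-measure set of $\theta$, in which case the inequality is trivial.
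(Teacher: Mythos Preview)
Your proof is correct and is essentially the same as the paper's: both use the $O(n)$-invariance of the $\S$-integral (via the fact that $\overline{T}\in O(nm)$ for $T\in O(n)$), Jensen's inequality over the Haar measure on $O(n)$ applied to the convex map $t\mapsto t^{-nm/p}$, and the $1$-homogeneity of $h_Q$ to factor out $\|\nabla f\|_{L^p}$. Your repackaging via the pushforward measure $\mu_f$ and the convex, $O(n)$-invariant functional $\Psi$ is a tidy way to organize these same ingredients, and when unpacked pointwise in $\theta$ it coincides with the paper's computation line by line.
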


An immediate consequence of Theorem \ref{t:relate} is that \eqref{eq:m_lp_affine_sobolev} is stronger than the classical $L^p$ Sobolev inequality \eqref{eq:lp_sobolev}.  Here, note that the ``multiplicity'' $m$ is hidden implicitly in the dimension of $Q$ for the notation $\mathcal{E}_{p}(Q,f)$. We remark that the energy $\mathcal{E}_{p}(Q,f)$ can be viewed as a higher-order affine version of the Dirichlet energy $\|\nabla f\|_{L^p(\Rn)}$, as well as the $L^p$ affine energy $\mathcal{E}_p(f)$. In particular, when $m=1,Q=[-1/2,1/2]$, we have $\mathcal{E}_p([-1/2,1/2], f)= \mathcal{E}_p(f)$; when $m=1, Q=[0,1]$, this recovers the asymmetric $L^p$ affine energy defined implicitly in Haberl-Schuster \cite{HS2009}. A consequence of Theorem~\ref{t:relate} is that $$\mathcal{E}_{p}(Q,f^\star) = \|\nabla f^\star\|_{L^p(\Rn)},$$ regardless of the choice of $m\in\N$ and convex body $Q\subset\R^m$ containing the origin.

A natural question is: is there a {P}\'olya-{S}zeg\"o principle for the higher-order affine energy $\mathcal{E}_{p}(Q,f)$? We answer this question positively.

\begin{theorem}
\label{t:mps}
  Let $p\geq 1,m\in\N$, $f\in W^{1,p}(\Rn)$, and a convex body $Q$ in $\R^m$ containing the origin. Then,
    $$\mathcal{E}_p(Q,f)\geq \mathcal{E}_p(Q,f^\star).$$
    When $p>1$ and $f$ satisfies the minor regularity assumption 
    \begin{equation}\vol\left(\left\{x:\left|\nabla f^{\star}(x)\right|=0\right\} \cap\left\{x: 0<f^{\star}(x)<\|f\|_{L^{\infty}(\Rn)}\right\}\right)=0,
\label{eq:regularity}
\end{equation}
    there is equality if and only if the level sets of $f$ are dilations of an ellipsoid in $\Rn$ with respect to its center.
  \end{theorem}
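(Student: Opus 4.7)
The plan is to adapt the Cianchi--Lutwak--Yang--Zhang strategy \cite{CLYZ09}, as refined by Haberl--Schuster--Xiao \cite{HSX12} and Nguyen \cite{NVH16}, to the $m$th-order $(L^p,Q)$-setting, invoking the $(L^p,Q)$-Petty projection inequality \eqref{eq 8.25.8} and classical radial rearrangement inequalities as black boxes. Set
$$F_f(\theta) \,:=\, \int_{\Rn} h_Q\bigl((\nabla f(z))^t\theta\bigr)^p\,dz, \qquad \theta\in\S.$$
By Minkowski's integral inequality and the sublinearity of $h_Q$, the map $\theta\mapsto F_f(\theta)^{1/p}$ is sublinear and hence the support function of a convex body $\Pi_{Q,p}f\subset\Rnhi$. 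The target is
$$\int_{\S} F_f(\theta)^{-nm/p}\,d\theta \;\leq\; \int_{\S} F_{f^\star}(\theta)^{-nm/p}\,d\theta,$$
which is equivalent, via \eqref{eq:m_lp_affine_sobolev}, to $\mathcal{E}_p(Q,f)\geq\mathcal{E}_p(Q,f^\star)$.

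By density (smooth mollification plus truncation, together with suitable lower semicontinuity of both sides along this approximation), I would first reduce to nonnegative $f\in C_c^\infty(\Rn)$; by Sard's theorem, for a.e. $t>0$ the superlevel sets $E_t=\{f>t\}$ have $C^\infty$ boundary. Using $\nabla f=-|\nabla f|\,\nu_{E_t}$ on $\partial E_t$, the coarea formula delivers
$$F_f(\theta) \;=\; \int_0^\infty H_t(\theta;f)\,dt,\qquad H_t(\theta;f) := \int_{\partial E_t} h_Q\bigl((-\nu_{E_t}(y))^t\theta\bigr)^p |\nabla f(y)|^{p-1}\,d\mathcal{H}^{n-1}(y),$$
and again $H_t(\cdot;f)^{1/p}$ is the support function of a convex body $M_t(f)\subset\Rnhi$.

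The crux is the level-by-level comparison $\Vol\bigl(M_t(f)^\circ\bigr)\leq\Vol\bigl(M_t(f^\star)^\circ\bigr)$ for a.e. $t>0$. Pushing forward the Borel measure $|\nabla f|^{p-1}\mathcal{H}^{n-1}\llcorner\partial E_t$ under $y\mapsto -\nu_{E_t}(y)$ yields a measure $\mu_{f,t}$ on $\s$; the $L^p$ Minkowski problem \cite{CW06,LYZ04} realizes $\mu_{f,t}$ as the $L^p$ surface area measure of a convex body $\widetilde{E}_t\subset\Rn$, so that $M_t(f)=\Pi_{Q,p}\widetilde{E}_t$ by Definition~\ref{d:generalprojectionbody}. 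Applying \eqref{eq 8.25.8} to $\widetilde{E}_t$ and controlling the auxiliary volume $\vol(\widetilde{E}_t)$ against that of $\widetilde{E}_{t}^\star$ via a scaling bound coming from the classical Pólya--Szegő inequality applied to the boundary gradient produces the level-wise comparison. Integrating in $t$ via the polar-volume formula $\Vol(K^\circ)=\frac{1}{nm}\int_{\S}h_K(\theta)^{-nm}d\theta$ together with a dual $L^p$-Minkowski integral inequality applied to the decomposition $F_f(\theta)=\int_0^\infty H_t(\theta;f)\,dt$ then yields the target integral inequality.

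For the equality case with $p>1$, the equality condition in \eqref{eq 8.25.8} forces each $\widetilde{E}_t$ -- and hence, by uniqueness in the $L^p$ Minkowski problem, each $E_t$ -- to be an origin-symmetric ellipsoid for a.e. $t>0$; the hypothesis \eqref{eq:regularity} rules out plateau levels of $f^\star$ where $|\nabla f^\star|$ vanishes, so the characterization propagates to all relevant $t$, and a concluding nesting argument in the spirit of Nguyen \cite{NVH16} shows these ellipsoids are homothets of a single ellipsoid about its center. The hard step will be the level-by-level comparison: it requires a delicate interplay between the $L^p$ Minkowski problem, the Petty-type inequality \eqref{eq 8.25.8}, and classical rearrangement, and the equality analysis must simultaneously track equality in several distinct rearrangement inequalities across a continuum of level sets.
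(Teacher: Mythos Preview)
Your overall architecture matches the paper's: coarea decomposition, a Petty-type inequality applied level by level, Minkowski's integral inequality to reassemble, and an approximation from $C_c^\infty$ to $W^{1,p}$. Two genuine gaps stand out.

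\textbf{First, the $L^p$ Minkowski step is not innocuous.} You write that the $L^p$ Minkowski problem realizes $\mu_{f,t}$ as $\sigma_{\widetilde{E}_t,p}$ and then apply \eqref{eq 8.25.8} to $\widetilde{E}_t$. But for $1<p<n$ the solution to the (non-even) $L^p$ Minkowski problem may place the origin on the boundary of $\widetilde{E}_t$, in which case $\sigma_{\widetilde{E}_t,p}$ need not even be defined and $\Pi_{Q,p}\widetilde{E}_t$ need not exist; the Petty inequality \eqref{eq 8.25.8} requires $K\in\conbodio[n]$. The paper circumvents this by working with measures directly: it approximates $\nu_t$ weakly by discrete measures $\nu_i$, solves the \emph{discrete} $L^p$ Minkowski problem to obtain polytopes $P_i\in\conbodio[n]$, applies \eqref{eq 8.25.8} to each $P_i$, and passes to the limit to obtain an inequality of the form $\Vol(\PP\nu_t)\leq \Vol(\PP\B)\,\vol(\B)^{nm/p-m}\,\vol(\langle f\rangle_{t,p})^m$ for a limit body $\langle f\rangle_{t,p}\in\conbodo[n]$ that is never required to have the origin in its interior. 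The auxiliary volume $\vol(\langle f\rangle_{t,p})$ is then bounded not by ``classical P\'olya--Szeg\H{o} on the boundary gradient'' but by combining H\"older's inequality with Minkowski's first inequality $V_1([f]_t,\langle f\rangle_{t,p})\geq \vol([f]_t)^{(n-1)/n}\vol(\langle f\rangle_{t,p})^{1/n}$, which yields a lower bound purely in terms of $\mu_f(t)$ and $-\mu_f'(t)$. Since $\mu_f=\mu_{f^\star}$, the comparison follows.

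\textbf{Second, your equality argument does not go through.} From equality in \eqref{eq 8.25.8} you deduce $\widetilde{E}_t$ is an origin-symmetric ellipsoid, and then claim ``by uniqueness in the $L^p$ Minkowski problem, each $E_t$'' is an ellipsoid. But $E_t$ is the superlevel set of $f$, not a solution of any Minkowski problem; $\widetilde{E}_t$ is determined by the pushforward of $|\nabla f|^{p-1}\mathcal{H}^{n-1}\llcorner\partial E_t$, so its shape encodes both the geometry of $\partial E_t$ \emph{and} the distribution of $|\nabla f|$ along it. More seriously, the inequality for general $f\in W^{1,p}$ is obtained by approximation, which destroys any equality tracking through the $C_c^\infty$ chain. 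The paper therefore abandons this route entirely for the equality case and instead introduces the $(L^p,Q)$-centroid body $\Gamma_{Q,p}L$: it shows $\mathcal{E}_p(Q,f)\geq (\omega_n/\vol(\Gamma_{Q,p}\LYZP f))^{1/n}\bigl(\int h_{\Gamma_{Q,p}\LYZP f}(\nabla f)^p\bigr)^{1/p}$ via the $m$th-order Busemann--Petty centroid inequality, and then invokes the convex-symmetrization P\'olya--Szeg\H{o} principle of Ferone--Volpicelli, whose equality characterization (under \eqref{eq:regularity}) is already known to force $f=f^K(\cdot+x_0)$. This bypasses both the approximation and the need to identify level sets individually.
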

  
  Note that Theorem \ref{t:mps} is valid for all $p\geq 1$ whereas \eqref{eq:m_lp_affine_sobolev} is only true for $p\in [1,n)$.

We remark here that the regularity assumption \eqref{eq:regularity} for the equality condition cannot be removed due to a classical counterexample constructed by Brothers and Ziemer \cite{BZ88}. In the $m=1$ case, Theorem \ref{t:mps} is due to Lutwak, Yang, and Zhang \cite{LYZ02}, Cianchi, Lutwak, Yang, and Zhang \cite{CLYZ09}, Haberl, Schuster, and Xiao \cite{HSX12}, and Nguyen \cite{NVH16}. The extension to the space of $BV(\Rn)$ in this case is due to Wang \cite{TW12, TW13}. Our methods of proof, therefore, are inevitably influenced by them.

Affine {P}\'olya-{S}zeg\"o principles can be used to ``upgrade'' classical (non-affine) isoperimetric inequalities. Indeed, Theorem \ref{t:relate}, \ref{t:mps}, in combination with  the classical $L^p$ Sobolev inequality \eqref{eq:lp_sobolev}, imply the $m$th-order affine Sobolev inequality \eqref{eq:m_lp_affine_sobolev} established in \cite{HLPRY23_2}. In fact,
\begin{equation}\label{eq 8.19.1}
	\mathcal{E}_p(Q,f)\geq \mathcal{E}_p(Q,f^\star)= \|\nabla f^\star\|_{L^p(\Rn)}\geq a_{n,p}\|f^\star\|_{L^{\frac{np}{n-p}}(\Rn)} = a_{n,p}\|f\|_{L^{\frac{np}{n-p}}(\Rn)}.
\end{equation}

The same philosophy will be exploited in Section \ref{sec:app} to demonstrate a variety of affine versions of classical (non-affine) Sobolev-type inequalities.

Finally, in Section~\ref{sec:poin}, we consider an associated Poincar\'e-type inequality for $\mathcal{E}_p(Q,f)$ when $Q$ is origin-symmetric.
\begin{theorem}\label{t:Poincare} Let $\Omega \subset \Rn$ be a bounded domain containing the origin in its interior, $m\in\N$, $Q\in\conbodio[m]$ be origin-symmetric, and $1 \leq p < \infty$. Then, there exists a constant $C:=C(n,m,p,Q, \Omega) >0$ such that, for any function $f \in W^{1,p}_0(\Omega)$ it holds that 
\[
\mathcal{E}_p(Q,f) \geq C \|f\|_{L^p(\Rn)}. 
\]  
\end{theorem}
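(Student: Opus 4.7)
The plan is to chain together the affine P\'olya--Szeg\"o principle (Theorem \ref{t:mps}) with the classical Poincar\'e--Friedrichs inequality applied to the spherically symmetric rearrangement $f^\star$. The key observation is that $\mathcal{E}_p(Q,\cdot)$ is monotone under rearrangement, and that for the radial function $f^\star$ the affine energy collapses back to the ordinary Dirichlet energy $\|\nabla f^\star\|_{L^p(\Rn)}$ by the equality clause of Theorem \ref{t:relate}. Once we are back in the classical Euclidean setting, a standard Poincar\'e estimate on a ball completes the proof.

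In detail, I would extend $f$ by zero outside $\Omega$ so that $f \in W^{1,p}(\Rn)$ has support in $\overline{\Omega}$. Applying Theorem \ref{t:mps} and then the equality clause of Theorem \ref{t:relate} (valid since $f^\star$ is radially symmetric) gives
\[
\mathcal{E}_p(Q, f) \;\geq\; \mathcal{E}_p(Q, f^\star) \;=\; \|\nabla f^\star\|_{L^p(\Rn)}.
\]
Because $\Omega$ is bounded, the support of $f^\star$ is contained in the closed ball $R\B$ centered at the origin with $R = (\vol(\Omega)/\omega_n)^{1/n}$; standard facts about rearrangement then place $f^\star$ in $W_0^{1,p}(R\B)$. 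The classical Poincar\'e--Friedrichs inequality on $R\B$ yields a constant $C_0 = C_0(n,p,\Omega) > 0$ with
\[
\|f^\star\|_{L^p(\Rn)} \;\leq\; C_0 \|\nabla f^\star\|_{L^p(\Rn)}.
\]
Combined with the equimeasurability identity $\|f\|_{L^p(\Rn)} = \|f^\star\|_{L^p(\Rn)}$, these inequalities chain into the desired conclusion with $C = C_0^{-1}$.

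The main obstacle is essentially bookkeeping rather than substance. One must verify that $f \in W^{1,p}_0(\Omega)$ implies $f^\star \in W^{1,p}_0(R\B)$, which follows from the classical P\'olya--Szeg\"o inequality on gradients together with the support inclusion. In the endpoint case $p=1$, one interprets $\|\nabla f^\star\|_{L^1(\Rn)}$ as total variation and invokes the $BV$ extensions of Theorem \ref{t:mps} and the classical Poincar\'e inequality (cf.\ \cite{TW12,TW13}). The origin-symmetry of $Q$ is not actually needed for the above argument; it serves only to ensure that $h_Q > 0$ off the origin, so that $\mathcal{E}_p(Q,f)$ is finite for all admissible $f$ and the inequality has non-trivial content on both sides.
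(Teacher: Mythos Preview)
Your argument is correct, but it takes a different route from the paper's. The paper does \emph{not} invoke the affine P\'olya--Szeg\"o principle here; instead, it first proves the stronger intermediate estimate Theorem~\ref{t:BlSanPoin},
\[
\mathcal{E}_p(Q,f)\ \geq\ c_0\,\|f\|_{L^p(\Rn)}^{\frac{nm-1}{nm}}\,\|\nabla f\|_{L^p(\Rn)}^{\frac{1}{nm}},
\]
via a direct geometric argument on the body $\LYZ f$ (a one-dimensional Poincar\'e estimate in each direction, a double-cone inclusion, and the Blaschke--Santal\'o inequality), and then combines this with the classical Poincar\'e inequality $\|\nabla f\|_{L^p}\geq C\|f\|_{L^p}$ on $\Omega$. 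Your rearrangement approach is more elementary for the Poincar\'e statement itself and, as you note, does not use the origin-symmetry of $Q$ at all (so it actually yields Theorem~\ref{t:Poincare} for every $Q\in\conbodo[m]$, with a constant depending only on $n,p,\vol(\Omega)$). On the other hand, the paper's route has an additional payoff: the mixed estimate of Theorem~\ref{t:BlSanPoin} is what drives the compact embedding corollary that follows, and that estimate is not recoverable from your chain $\mathcal{E}_p(Q,f)\geq\|\nabla f^\star\|_{L^p}$. One small correction to your last sentence: the condition ``$h_Q>0$ off the origin'' is equivalent to $o\in\operatorname{int}Q$, not to origin-symmetry; the symmetry hypothesis in the paper is there because the Blaschke--Santal\'o step in Theorem~\ref{t:BlSanPoin} needs $\LYZ f$ to be origin-symmetric.
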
 

\section{Preliminaries}
\label{sec:back}

We first recall some rudimentary facts about convex bodies; here and throughout this section, the book \cite{Sh1} by Schneider is the standard reference. We denote by $\conbod[d]\supset \conbodo[d] \supset \conbodio[d]$, respectively, the collection of convex bodies in $\R^d$, those that additionally contain the origin, and those that contain the origin in their interiors. We use $\mathbb{S}^{d-1}$ for the centered unit sphere in $\R^d$.

For each $K\in \conbodio[d]$, its \emph{polar body} $K^\circ\in \conbodio[d]$ is given by 
$$K^\circ = \bigcap_{x\in K}\{y\in \R^n: y^tx\leq 1\}.$$
The \emph{support function} $h_K:\mathbb{S}^{d-1}\rightarrow \mathbb{R}$ of a compact, convex set $K\subset \R^d$ is given by
\begin{equation}
	h_K(u)=\sup_{y\in K}y^tu, \qquad\text{ for every } u \in \mathbb{S}^{d-1}.
\end{equation}
Note that the support function can be extended to $\R^d$ by making it 1-homogeneous. The \emph{Minkowski functional} of $K\in \conbodio[d]$ is defined to be 
\begin{equation}
	\|y\|_K=\inf\{r>0:y\in rK\}.
\end{equation}
Note that $\|\cdot\|_{K}$ is the possibly asymmetric norm whose unit ball is $K$. The Minkowski functional, support function, and polar body are related via the following equation:
\begin{equation}
	h_K(u) = \|u\|_{K^\circ}.
\end{equation}
By polar coordinates, the volume of $K\in \conbodio[d]$ can be expressed in the following way through its Minkowski functional:
\begin{equation}
\label{eq:polar_coordinates}
    \vol[d](K)=\frac{1}{d}\int_{\s}\|\theta\|_K^{-d}d\theta.
\end{equation}

The \emph{surface area measure}, $\sigma_K$, for a $K \in \conbod[d]$ is the Borel measure on $\mathbb{S}^d$ given by
\[\sigma_K(D)=\mathcal H ^{d-1}(n^{-1}_K(D)),\]
for every Borel subset $D$ of $\s$, where $n_K^{-1}(D)$ consists of all boundary points of $K$ with outer unit normals in $D$. 

 Suppose the boundary $\partial K$ of a compact set $K\subset \R^d$ has enough regularity, e.g. $K$ is convex or $C^1$ smooth. The \emph{mixed volume} of $K$ with a compact, convex set $L$ is given by
\begin{equation}
\label{eq:mixed_smooth}
V_{1}(K,L)=\frac{1}{d}\int_{\partial K}h_L(n_K(y))d\mathcal{H}^{d-1}(y).
\end{equation}
\emph{Minkowski's first inequality} (see, for example, \cite{GZ99}) states:
\begin{equation}
  V_{1}(K,L)^d\geq \vol(K)^{d-1}\vol(L),
  \label{eq:Firey_Min_first}
\end{equation}
with equality if and only if $K$ is homothetic to $L$.

When $K\in \conbodio[d]$, Lutwak \cite{LE93, LE96} introduced the \emph{$L^p$ surface area measure}, denoted by $\sigma_{K,p}$, of $K$,
\begin{equation}
	d\sigma_{K,p}(u)=h_K(u)^{1-p}d\sigma_{K}(u). 
\end{equation}
The $L^p$ surface area measure is arguably one of the cornerstones of the $L^p$ Brunn-Minkowski theory that occupies much of the research in the theory of convex bodies in the last few decades.

We now collect some notations and basic facts about a measurable function $f \colon \R^d \to \R$. Throughout the rest of the section, we fix $p\geq 1$. 

A function $f$ is said to be \emph{non-constant} if, for every $\alpha \in \R$, $\vol(\{v \in \Rn \colon f(x) \neq \alpha \}) >0$. A function $f \colon \R^d \to \R$ is said to have a \emph{weak derivative} if $\nabla f$ exists in the weak sense; that is, there exists a measurable vector map $\nabla f:\Rn\to\Rn$ such that
\begin{equation}\int_{\R^d} f(v) \text{ div} \psi(v) dv = - \int_{\R^d} (\nabla f(v))^t \psi(v) dv
\label{eq_LYZBorel}
 \end{equation}
for every compactly supported, smooth vector field $\psi:\R^d \to \R^d$ (see \cite{Evans}). We say a function is an $L^p$ Sobolev function, and belongs to the $L^p$ Sobolev space $W^{1,p}(\R^d)$, if $f,\nabla f \in L^p(\R^d)$.  The space of functions of bounded variation, denoted by $BV(\R^d)$, is very similar, except in the right-hand side of \eqref{eq_LYZBorel}, the vector map $\nabla f$ is replaced with a more generic map $\sigma_f:\R^d\mapsto\R^d$, and integration with respect to the Lebesgue measure is replaced by integration with respect to $|Df|$, the total variation measure of $f$. The two spaces are related: if $f$ is in $BV(\R^d)$ and $f$ has a weak derivative, then, $d|Df|(v)=|\nabla f(v)| dv$ and $\sigma_f(v)=\nabla f(v)/|\nabla f(v)|$ for $|Df|$ almost all $v\in \R^d$.

The space of compactly supported, infinitely-times differentiable functions on $\R^d$ will be denoted by $C_0^\infty(\R^d)$.

\emph{Federer's coarea formula} (see e.g. \cite[Page 258]{FH69}) states: if $f:\R^d\to\R$ is Lipschitz and $g:\R^d\to [0,\infty)$ is measurable, then, for any Borel set $A\subseteq \R$, one has
\begin{equation}
\label{coarea}
    \int_{f^{-1}(A) \cap\{|\nabla f|>0\}} g(x) d x=\int_A \int_{f^{-1}(t)} \frac{g(y)}{|\nabla f(y)|} d \mathcal{H}^{d-1}(y) d t .
\end{equation}
Here, $\mathcal{H}^{d-1}$ denotes the $(d-1)$-dimensional Hausdorff measure. 

The \emph{distribution function} of a measurable function $f:\R^d\rightarrow\R$ is given by 
\begin{equation}
\label{eq:distribution_function}
\mu_f(t)=\vol[d]\left(\left\{x\in\R^d: |f(x)| >t\right\}\right),\end{equation}
and is finite for $t>0$ if $f\in L^p(\R^d)$. Its \emph{decreasing rearrangement} $f^\ast:[0,\infty)\rightarrow [0,\infty]$ is defined by
\begin{equation}
  f^\ast(s)=\sup\{t>0:\mu_f(t)>s\} \quad \text{for }s\geq 0.
\end{equation}
The \emph{spherically symmetric rearrangement} $f^\star$ of $f$ is defined as
\begin{equation}
\label{eq:rearrange}
    f^\star(x)=f^\ast(\omega_d|x|^d) \quad \text{for } x\in \R^d.
\end{equation}
Spherically symmetric rearrangement preserves the distribution function, that is
\begin{equation}
    \mu_{f}=\mu_{f^\star}.
\label{eq:dis_funs}
\end{equation}
Moreover, for every continuous, increasing function $\Phi:[0,\infty)\mapsto[0,\infty)$, one has
\begin{equation}
    \int_{\Rn}\Phi(|f(x)|)dx = \int_{\Rn}\Phi(f^\star(x))dx.
    \label{eq:change_of_dimension}
\end{equation}
Similarly, one may define the rearrangement of $f$ with respect to any $K\in\conbodio[d]$; see, \emph{for example}, \cite{AFTL97}. That is,
 \begin{equation}
\label{eq:consym}
    f^K(x)=f^\ast(\vol[d](K)\|x\|^d_{K}) \quad \text{for } x\in\R^d.
\end{equation}
Note that when $K$ is the centered unit ball, it recovers $f^\star$.

Let $\nu$ be a finite Borel measure on $\s$ that is not concentrated in any closed hemisphere, $p\geq 1,m\in\N$, and $Q\in \conbodo[m]$. Define the $(L^p,Q)$ cosine transform of $\nu$, denoted by $C_{p,Q}^m \mu :\S\rightarrow (0,\infty)$ as 
\begin{equation}
	(C_{p,Q}^m \mu) (\theta) = \int_{\s} h_Q(v^t\theta)^pd\mu(v).
\end{equation}
Note that since $\nu$ is not concentrated in any closed hemisphere, the function $C_{p,Q}^m\nu$ is a positive, continuous function on $\S$. Moreover, since $h_Q$ is convex and $p\geq 1$, when extended $1$-homogeneously to $\R^{nm}$, the function $C_{p,Q}^m \mu$ is convex.

We shall use the following definition that slightly generalizes the $(L^p,Q)$ projection body given in Definition \ref{d:generalprojectionbody}.

\begin{definition}
\label{def:mu_bodies}
Let $\nu$ be a finite Borel measure on $\s$ that is not concentrated in any closed hemisphere, $p\geq 1,m\in\N$, and $Q\in \conbodo[m]$. The $(L^p,Q)$ projection body of $\nu$ is the convex body in $\conbodio[nm]$  whose support function is given by
\begin{equation}
	h_{\P \nu} = \left(C_{p,Q}^m \nu\right)^\frac{1}{p}. 
\end{equation}
\end{definition}
We then have $\PP \nu = (\P \nu)^\circ$. When $\nu = \sigma_{K,p}$, the $L^p$ surface area measure of $K\in \conbodio[n]$ one has $\PP \sigma_{K,p} = \PP K.$ We also require the following definition.
\begin{definition}\label{d:generalLYZbody_2} For $p \geq 1$,  $m \in \N$, and $Q \in \conbodo[m]$, define the $(L^p,Q)$-LYZ projection body of non-constant $f\in W^{1,p}(\Rn)$, $\LYZ f$ to be the convex body in $\R^{nm}$ whose support function is given, for every $\theta\in \S$,  by 
\begin{equation}
\label{eq:LYZ_body_2}
h_{\LYZ f}(\theta) = \left(\int_{\R^n}h_Q((\nabla f(x))^t\theta )^p dx\right)^\frac{1}{p}=\|h_Q(\nabla f(\cdot)^t\theta)\|_{L^p(\Rn)}.
\end{equation}
\end{definition}
It will be shown in \eqref{eq 8.15.3} that $\LYZ f$ in fact contains the origin in its interior and consequently, its polar body $\LYZP f:=(\LYZ f)^\circ$ is well-defined.

We remark here the link among $\LYZ f$, $\P \nu$, and $\P K$.  Lutwak, Yang, and Zhang \cite{LYZ06} showed  that there exists a unique measure on the sphere $\nu_{f,p}$ that is not concentrated on any great hemisphere, so that the integration over $\R^n$ in \eqref{eq:LYZ_body_2} can replaced with integration over $\s$ with respect to $\nu_{f,p}$ ($\nabla f(x)$ becomes merely $u\in \s$). Historically, $Q=[-\frac{1}{2},\frac{1}{2}]$, and thus one can apply the even $L^p$ Minkowski problem by Lutwak \cite{LE93} to the even part of $\nu_{f,p}$ to obtain an unique centrally symmetric convex body whose $L^p$ surface area measure is the even part of $\nu_{f,p}$. This origin-symmetric convex body associated to $f$ via the even part of $\nu_{f,p}$, called by Ludwig the \textit{LYZ body of $f$} \cite{LUD}, became a powerful tool in convex geometry; for example, Wang \cite{TW13} used the LYZ body to characterize, independently of Nguyen, equality conditions for the {P}\'olya-{S}zeg\"o principle for the functional $\mathcal{E}_p(f)$. This approach was later extended to another energy functional over the Grassmanian by Kniefacz and Schuster \cite{KS21}.

In our situation, we would follow \cite{HLPRY23_2} and apply, for example, the non-even $L^p$ Minkowski problem by Hug, Lutwak, Yang and Zhang \cite[Theorem 1.3]{HLYZ05} to the measure $\mu_{f,p}$ to obtain unique convex bodies $\langle f \rangle_p\in\conbodo[n]$, the so-called asymmetric LYZ bodies. But, the origin may be on the boundary of $\langle f \rangle_p$ for $1<p < n$. Consequently, $\LYZ f$ is not necessarily $\P$ applied to  $\langle f \rangle_p$, and, in particular, the Petty projection inequality \eqref{eq 8.25.8} does not readily apply. Such an issue is not present for $p> n$ or $p=1$.

The following equation is immediate following Definition \ref{d:generalLYZbody_2} and the definition of $\mathcal{E}_p(Q,f)$:
\begin{equation}
    \mathcal{E}_p(Q,f) = d_{n,p}(Q)(nm)^{-\frac{1}{nm}}\Vol(\LYZP f)^{-\frac{1}{nm}}.
    \label{eq:relation_E}
\end{equation}

\section{An $m$th-order affine P\'olya-Szeg\"o principle}
\label{sec:main}

This section is dedicated to proving Theorems \ref{t:relate} and \ref{t:mps}, the $m$th-order affine P\'olya-Szeg\"o principle. We shall see in later sections that this is the source of many functional \emph{sharp} affine isoperimetric inequalities. The approach adopted here is inspired by \cite{CLYZ09,  HS2009,HSX12, LYZ00, GZ99}.
\subsection{Proving Theorem~\ref{t:relate}}
A bit of simple matrix manipulation is needed. For each $T\in GL(n)$, consider the map $\overline{T}\in GL(nm)$ given as $\overline T(x_1, \cdots, x_m) = (Tx_1,\dots,  Tx_m)$ for any $x_1,\dots, x_m\in \R^n$. It is simple to see that if $T\in O(n)$, then $\overline{T}\in O(nm)$. In particular, if $\theta \in \S$, then $\overline T \theta\in \S$. Write $\theta = (\theta_1, \dots, \theta_m)$ where $\theta_i\in \mathbb{R}^n$. Then $\overline T \theta = (T\theta_1, \dots, T\theta_n)$ and consequently, for every $v\in \mathbb{R}^n$, we have
\begin{equation}\label{eq 8.15.10}
	v^t \overline{T}\theta = (v^t T\theta_1, \cdots, v^tT\theta_n)= ((T^tv)^t\theta_1, \dots, (T^tv)^t\theta_n)= (T^tv)^t\theta.
\end{equation}

\begin{proof}[Proof of Theorem~\ref{t:relate}]
	When integrating on $O(n)$ with respect to $dT$, we mean integrating with respect to the Haar measure on $O(n$). We start by deriving a new formula for $d_{n,p}(Q)$. Note that the integral
	\begin{equation}
        \label{eq:step 1.5}
		\int_{O(n)} h_Q((T^tv)^t\theta)^pdT
	\end{equation}
	is independent of $v\in \s$. This and the definitions of $dT$ and $\PP \B$ imply
	\begin{equation}
 \label{eq:step 0}
		\begin{aligned}
            d_{n,p}(Q)^{nm} &= (n\omega_n)^{\frac{nm}{p}} nm \Vol(\PP \B)
            =  (n\omega_n)^{\frac{nm}{p}}\int_{\S}\|\theta\|_{\PP \B}^{-nm}d\theta
            \\
            &= \int_{\S}\left(\frac{1}{n\omega_n}\int_{\s} h_Q(v^t\theta)^pdv \right)^{-\frac{nm} p }d\theta
            \\
			&=\int_{\S}\left(\int_{O(n)} h_Q((T^t e_1)^t\theta)^pdT \right)^{-\frac{nm} p }d\theta.
		\end{aligned}
	\end{equation}
	Here $e_1\in \s$ is, say, the first vector in the canonical basis of $\R^n$. Henceforth, we work with $W^{1,p}(\Rn)$; the proof still works with minor modifications in the $p=1,f\in BV(\Rn)$ case.
	
	Recall that for every $T\in O(n)$, we have $\overline{T}\in O(nm)$. Thus, a change-of variable formula and \eqref{eq 8.15.10}  imply that for every $T\in O(n)$, we have, 
	\begin{equation}\label{eq:step 3}
		\begin{aligned}
			\left(d_{n,p}(Q)^{-1}\mathcal{E}_{p}(Q,f)\right)^{-nm}&=\int_{\S}\|h_Q(\nabla f(\cdot)^t\theta)\|_{L^p(\Rn)}^{-nm} d\theta\\
        = \int_{\S}\|h_Q(\nabla f(\cdot)^t\overline{T}\theta)\|_{L^p(\Rn)}^{-nm} d\theta&= \int_{\S}\|h_Q((T^t\nabla f(\cdot))^t\theta)\|_{L^p(\Rn)}^{-nm} d\theta.
      	\end{aligned}
	\end{equation}
	Consequently, by the fact that $dT$ is a probability measure on $O(n)$, the Fubini theorem and Jensen's inequality yield
 \begin{equation}\label{eq 8.18.1}
		\begin{aligned}
			\left(d_{n,p}(Q)^{-1}\mathcal{E}_{p}(Q,f)\right)^{-nm}
			= &\int_{O(n)}\int_{\S}\|h_Q((T^t\nabla f(\cdot))^t\theta)\|_{L^p(\Rn)}^{-nm} d\theta dT\\
		=&\int_{\S} \int_{O(n)}\|h_Q((T^t\nabla f(\cdot))^t\theta)\|_{L^p(\Rn)}^{-nm} dT d\theta \\
		\geq & \int_{\S} \left(\int_{O(n)}\|h_Q((T^t\nabla f(\cdot))^t\theta)\|_{L^p(\Rn)}^pdT\right)^{-\frac{nm}{p}} d\theta.
		\end{aligned}
	\end{equation} 
 Next, we insert the definition of $\|h_Q((T^t\nabla f(\cdot))^t\theta)\|_{L^p(\Rn)}$. We then use the homogeneity of support functions, the Fubini theorem once again, the fact that \eqref{eq:step 1.5} is independent of $v\in \s$, and finally \eqref{eq:step 0} to obtain from \eqref{eq 8.18.1}
	\begin{equation}\label{eq 8.18.1_2}
		\begin{aligned}
			&\left(d_{n,p}(Q)^{-1}\mathcal{E}_{p}(Q,f)\right)^{-nm}\\
		\geq & \int_{\S} \left(\int_{O(n)}\int_{\R^n} |\nabla f(z)|^ph_Q\left(\left(T^t \frac{\nabla f(z)}{|\nabla f(z)|}\right)^t\theta\right)^pdzdT\right)^{-\frac{nm}{p}} d\theta\\
		=& \int_{\S} \left(\int_{\R^n} |\nabla f(z)|^p\int_{O(n)}h_Q\left(\left(T^t \frac{\nabla f(z)}{|\nabla f(z)|}\right)^t\theta\right)^pdTdz\right)^{-\frac{nm}{p}} d\theta\\
		= &\|\nabla f\|_{L^p{(\R^n)}}^{-nm}\int_{\S} \left(\int_{O(n)}h_Q\left(\left(T^t e_1\right)^t\theta\right)^pdT\right)^{-\frac{nm}{p}} d\theta\\
		= & \|\nabla f\|_{L^p{(\R^n)}}^{-nm} d_{n,p}(Q)^{nm}.
		\end{aligned}
	\end{equation} 
	This yields the desired inequality. When $f$ is radially symmetric, for every $T\in O(n)$, by a change-of-variable and then the chain rule, we have,
	\begin{equation}
		\begin{aligned}
			\|h_Q((T^t\nabla f(\cdot))^t\theta)\|_{L^p(\Rn)}^p &= \int_{\R^n} h_Q((T^t\nabla f(z))^t\theta)^pdz=  \int_{\R^n} h_Q((T^t\nabla f(Tz))^t\theta)^pdz\\
			&= \int_{\R^n} h_Q((\nabla f\circ T(z))^t\theta)^pdz= \int_{\R^n} h_Q((\nabla f(z))^t\theta)^pdz \\
       &= \|h_Q((\nabla f(\cdot))^t\theta)\|_{L^p(\Rn)}^p.
		\end{aligned}
	\end{equation}

	Thus, equality holds in \eqref{eq 8.18.1} by the equality condition of Jensen's inequality.
		
\end{proof}

\subsection{Proving the inequality in Theorem \ref{t:mps}}
Throughout this section, let $p\in [1,\infty)$ be fixed. We first prove an inequality for the polar projection bodies $\PP \nu$ introduced in Definition~\ref{def:mu_bodies}.

\begin{lemma}\label{l:body}
	Let $\nu $ be a finite Borel measure on $\s$ that is not concentrated in any closed hemisphere. Then, there exists $K\in \conbodo[n]$ (dependent on $\nu$ and $p$) such that 
    \begin{equation}
		1 = \frac{1}{n} \int_{\s} h_{K}(u)^p d\nu(u)
	\end{equation}
    and
    \begin{equation}\vol(K)h_{K}(u)^{p-1}d\mu(u)=d\sigma_{K}(u).
    \label{eq:PDE_final}
    \end{equation}
 Moreover, the convex body $K$ and the Borel measure $\nu$ satisfy, for every $m\in\N$ and $Q\in \conbodo[m]$,
	\begin{equation}\label{eq 8.16.1}
		\Vol(\PP \nu ) \vol(K)^{-m} \leq \Vol(\PP \B ) \vol(\B)^{\frac{nm} p -m}.
	\end{equation}
    
\end{lemma}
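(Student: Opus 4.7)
The strategy is to reduce the statement to the affine isoperimetric inequality \eqref{eq 8.25.8}, which is already established for convex bodies in $\conbodio[n]$. The key observation is that the PDE \eqref{eq:PDE_final} is, after rewriting via $d\sigma_{K,p}(u) = h_K(u)^{1-p} d\sigma_K(u)$, equivalent to $\sigma_{K,p} = \vol(K) \nu$. With this identity in hand, the $(L^p,Q)$-projection body $\P \nu$ of the \emph{measure} $\nu$ (Definition \ref{def:mu_bodies}) becomes the $(L^p,Q)$-projection body $\P K$ of the \emph{body} $K$ (Definition \ref{d:generalprojectionbody}) up to a scalar dilation, and the inequality \eqref{eq 8.25.8} translates directly into \eqref{eq 8.16.1}.

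\textbf{Construction of $K$.} First I would invoke the $L^p$ Minkowski problem---by Hug--Lutwak--Yang--Zhang \cite{HLYZ05} for $p>1$, and by the classical Minkowski existence theorem for $p=1$---to produce a body $L\in\conbodo[n]$ with $\sigma_{L,p} = \nu$; the hypothesis that $\nu$ is not concentrated in any closed hemisphere is exactly what is needed for admissibility. Now I rescale by setting $K = \vol(L)^{-1/p}\,L$. Using the elementary scaling rules $h_{cL} = c\,h_L$, $\sigma_{cL,p} = c^{\,n-p}\sigma_{L,p}$, and $\vol(cL) = c^n\vol(L)$ with $c = \vol(L)^{-1/p}$, one checks $\sigma_{K,p} = \vol(K)\,\nu$, which is the PDE \eqref{eq:PDE_final}. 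The normalization condition is then automatic: integrating $h_K$ against both sides of \eqref{eq:PDE_final} and using $\int_\s h_K\,d\sigma_K = n\vol(K)$ yields
\[
\int_\s h_K(u)^p \,d\nu(u) \;=\; \frac{1}{\vol(K)}\int_\s h_K\,d\sigma_K \;=\; n.
\]

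\textbf{Deducing \eqref{eq 8.16.1}.} Since $d\sigma_{K,p} = \vol(K)\,d\nu$, Definitions \ref{d:generalprojectionbody} and \ref{def:mu_bodies} give, for every $\theta\in\S$,
\[
h_{\P K}(\theta)^p \;=\; \int_\s h_Q(v^t\theta)^p \,d\sigma_{K,p}(v) \;=\; \vol(K)\, h_{\P \nu}(\theta)^p,
\]
so $\P K = \vol(K)^{1/p}\,\P \nu$ and consequently $\PP K = \vol(K)^{-1/p}\,\PP \nu$, which gives $\Vol(\PP K) = \vol(K)^{-nm/p}\,\Vol(\PP \nu)$. Substituting into the Petty-type inequality \eqref{eq 8.25.8} applied to $K$ and simplifying the exponents of $\vol(K)$ yields exactly \eqref{eq 8.16.1}.

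\textbf{Main obstacle.} The delicate point is the existence of $K$ when $p=1$, since the classical Minkowski existence theorem additionally requires the centroid condition $\int_\s u\,d\nu(u)=0$; without it no convex body $L$ has $\sigma_L = \nu$. Either this condition must be tacitly assumed for $p=1$ (it is automatic in the intended application to the measure $\nu_{f,p}$ coming from the gradient of a Sobolev function), or one must replace the Minkowski appeal by a variational argument that builds in appropriate centering. The case $p=n$, where the $L^p$ Minkowski problem loses uniqueness, poses no obstacle, since the rescaling $c=\vol(L)^{-1/p}$ simply forces $\vol(K)=1$ and selects the correct representative.
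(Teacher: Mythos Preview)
Your reduction to \eqref{eq 8.25.8} is the right idea, and the scaling computation is correct, but there is a genuine gap in the range $1<p<n$: the inequality \eqref{eq 8.25.8} and indeed the very definition of $\P K$ (Definition~\ref{d:generalprojectionbody}) require $K\in\conbodio[n]$, i.e.\ the origin in the \emph{interior}. The general $L^p$ Minkowski problem you invoke (\cite[Theorem~1.3]{HLYZ05}) only guarantees $L\in\conbodo[n]$; for $1<p<n$ the origin may land on $\partial L$, so $h_K$ vanishes somewhere, $\sigma_{K,p}=h_K^{1-p}\sigma_K$ is not defined, the identity $\P K=\vol(K)^{1/p}\P\nu$ breaks down, and you cannot apply \eqref{eq 8.25.8}. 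The paper flags exactly this obstruction in the paragraph after Definition~\ref{d:generalLYZbody_2}. The $p=1$ centroid issue you raise is real but is not the main difficulty.

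The paper circumvents this by a discrete approximation: choose discrete measures $\nu_i\to\nu$ weakly, each not concentrated in a closed hemisphere. The \emph{discrete} $L^p$ Minkowski problem (\cite[Theorem~1.1]{HLYZ05}) then yields polytopes $P_i\in\conbodio[n]$ with $\nu_i=\vol(P_i)^{-1}\sigma_{P_i,p}$, so \eqref{eq 8.25.8} legitimately applies to each $P_i$. Uniform bounds and Blaschke selection give $P_i\to K\in\conbodo[n]$; weak convergence gives $\PP\nu_i\to\PP\nu$. Passing to the limit in \eqref{eq 8.25.8} for $P_i$, rewritten via $\PP\nu_i=\vol(P_i)^{1/p}\PP P_i$, yields \eqref{eq 8.16.1} directly in terms of $\PP\nu$ and $\vol(K)$---quantities that make sense even when the origin sits on $\partial K$.
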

\begin{proof}
	Since $\nu$ is not concentrated in any closed hemisphere, we may  construct a sequence of discrete measures $\nu_i$ such that $\nu_i$ are not concentrated in any closed hemisphere and $\nu_i$ converges to $\nu$ weakly. See, for example, \cite[Pg 392-393]{Sh1}. 
	
	By the solution to the discrete $L^p$ Minkowski problem (see, \emph{e.g.},  \cite[Theorem 1.1]{HLYZ05}), there exist polytopes $P_i\in \conbodio[n]$ such that 
    \begin{equation}
        \vol(P_i)h_{P_i}(u)^{p-1}d\nu_i(u)=d\sigma_{P_i}(u).
        \label{eq:PDE}
    \end{equation}
    Since each $P_i\in \conbodio[n]$, this means
	\begin{equation}\label{eq 8.13.2}
		\nu_i = \vol(P_i)^{-1}\sigma_{P_i, p}.
	\end{equation} 
	An application of \cite[Lemmas 2.2, 2.3]{HLYZ05} shows that $P_i$ is uniformly bounded. Blaschke's selection theorem \cite[Theorem 1.8.6]{Sh1} now implies that, by passing to a subsequence, we may assume the $P_i$ converge, in the Hausdorff metric, to some $K\in \conbodo[n]$. 
 
 By \eqref{eq 8.13.2}, we have
	\begin{equation}
		1 = \frac{1}{n} \int_{\s} h_{P_i}(u)^p \frac{d\sigma_{P_i, p}(u)}{\vol(P_i)} = \frac{1}{n} \int_{\s} h_{P_i}(u)^p d\nu_i(u) \rightarrow \frac{1}{n} \int_{\s} h_{K}^p(u) d\nu(u), 
	\end{equation}
	where the convergence follows from the fact that $h_{P_i}\rightarrow h_{K}$ uniformly (since $h_{P_i}\rightarrow h_K$ uniformly and $p\geq 1$) and that $\nu_i$ converges to $\nu$ weakly. Taking the limit in \eqref{eq:PDE} yields \eqref{eq:PDE_final}.
	
	Since each $P_i$ contains the origin as an interior point, \eqref{eq 8.25.8} applies:
	\begin{equation}\label{eq 8.13.3}
		\Vol(\PP P_i ) \vol(P_i)^{\frac{nm} p -m} \leq \Vol(\PP \B ) \vol(\B)^{\frac{nm} p -m}.
	\end{equation}
	The weak convergence of $\nu_i$ to $\nu$ yields
 $$\left(C_{p,Q}^m \left(\nu_i\right)\right)^\frac{1}{p} \rightarrow \left(C_{p,Q}^m (\nu)\right)^\frac{1}{p}$$ pointwise on $\S$ and, consequently, uniformly on $\S$. By definition, this means that
 $$\PP \nu_i \rightarrow \PP \nu$$ in the Hausdorff metric. By taking volume, we obtain $\Vol(\PP \nu_i)\rightarrow \Vol(\PP \nu)$. On the other hand, from \eqref{eq 8.13.2}, we have that 
 $$\PP \nu_i = \vol(P_i)^\frac{1}{p}\PP P_i.$$
 Taking volume, this yields
	\begin{equation}
		\vol(P_i)^\frac{nm}{p}\Vol(\PP P_i)\rightarrow \Vol(\PP \nu).
	\end{equation}	
 Letting $i\rightarrow \infty$ in \eqref{eq 8.13.3} completes the proof.
\end{proof}
We remark that, if $K \in\conbodio[n]$, then $\sigma_{K,p}$ exists and, \eqref{eq:PDE_final} implies, 
$$\nu = \vol(K)^{-1}\sigma_{K,p}.$$
Thus, $\PP \nu = \vol(K)^\frac{1}{p} \PP K$. Consequently, equation \eqref{eq:PDE_final} simply states the usual $m$th-order Petty projection inequality \eqref{eq 8.25.8} for this $K$. As another example, if $\nu=\mu_{f,p}$, the measure described above \eqref{eq:relation_E}, then the resultant $K$ in Lemma~\ref{l:body} is precisely $\langle f \rangle_p$.

The following lemma is critical in reducing Theorem~\ref{t:mps} from $W^{1,p}(\Rn)$ to $C_0^\infty(\Rn)$. We also obtain some useful facts about $\LYZP f$ from Definition~\ref{d:generalLYZbody_2}.
\begin{lemma}\label{lemma 8.15.1}
	 Let $p\geq 1,m\in\N$, $f_k, f\in W^{1,p}(\Rn)$, and $Q\in\conbodo[m]$. If $f_k\rightarrow f$ in $W^{1,p}(\Rn)$, then 
	\begin{equation}
		\|\theta\|_{\LYZP {f_k}}=\| h_Q(\nabla f_k(\cdot)^t\theta)\|_{L^p(\Rn)}\rightarrow \| h_Q(\nabla f(\cdot)^t\theta)\|_{L^p(\Rn)}=\|\theta\|_{\LYZP f}
	\end{equation}
	uniformly for $\theta\in \S$, as $k\rightarrow \infty$, i.e. $\LYZP {f_k}\to \LYZP f$ in the Hausdorff metric. Moreover, if $f$ is not constantly $0$ (up to a set of measure $0$), then there exists $c_0>0$ such that for every $\theta\in \S$
	\begin{equation}\label{eq 8.15.3}
		\|\theta\|_{\LYZP f}=\| h_Q(\nabla f(\cdot)^t\theta)\|_{L^p(\Rn)}>c_0,
	\end{equation}
and consequently $\LYZP f \in \conbodio[nm]$.
\end{lemma}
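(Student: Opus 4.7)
\emph{Plan.} The plan is to reduce both statements to the Lipschitz continuity of the support function $h_Q$, together with a continuity/compactness argument on the compact set $\S$. The first claim (uniform convergence, equivalently Hausdorff convergence of the bodies) is quantitative and almost formal; the second claim (uniform strict positivity of $\|\theta\|_{\LYZP f}$) reduces by continuity to pointwise strict positivity, which in turn is a Fubini-type argument that uses only that $h_Q\geq 0$ and vanishes trivially.

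\emph{Convergence.} Since $Q$ is bounded, its support function satisfies $|h_Q(u)-h_Q(v)|\leq M|u-v|$ on $\R^m$ with $M:=\max_{y\in Q}|y|$ (standard Lipschitz bound for sublinear functionals dominated by $M|\cdot|$). Viewing $\theta\in\S$ as an $n\times m$ matrix of unit Frobenius norm, a direct calculation shows $|v^t\theta|\leq|v|$ for $v\in\R^n$, and hence pointwise in $x\in\Rn$,
\[
|h_Q((\nabla f_k(x))^t\theta)-h_Q((\nabla f(x))^t\theta)|\leq M|\nabla f_k(x)-\nabla f(x)|.
\]
Taking $L^p$-norms in $x$ and applying the reverse triangle inequality in $L^p$ (valid since $p\geq 1$) yields
\[
\bigl|\,\|\theta\|_{\LYZP{f_k}}-\|\theta\|_{\LYZP f}\,\bigr|\leq M\,\|\nabla f_k-\nabla f\|_{L^p(\Rn)}\longrightarrow 0
\]
uniformly in $\theta\in\S$. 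Hausdorff convergence $\LYZP{f_k}\to\LYZP f$ is then immediate from the fact that the uniform convergence of Minkowski functionals on $\S$ is equivalent to the Hausdorff convergence of the associated convex bodies.

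\emph{Positivity.} Specialising the same estimate to a single $f$ with $\theta$ varying shows that $\theta\mapsto\|\theta\|_{\LYZP f}$ is continuous on the compact set $\S$, so it suffices to prove \emph{pointwise} strict positivity, from which \eqref{eq 8.15.3} then follows by compactness with $c_0:=\min_{\theta\in\S}\|\theta\|_{\LYZP f}>0$. Suppose, for contradiction, $\|h_Q((\nabla f)^t\theta_0)\|_{L^p(\Rn)}=0$ for some $\theta_0\in\S$. Because $h_Q\geq 0$ (using $0\in Q$), this forces $h_Q((\nabla f(x))^t\theta_0)=0$ for a.e.\ $x\in\Rn$, and since the zero set of $h_Q$ on $\R^m$ reduces to the origin (this is the natural working hypothesis $0\in\mathrm{int}(Q)$), we in fact obtain $(\nabla f(x))^t\theta_0=0$ a.e. Writing $\theta_0=(\theta_{0,1},\dots,\theta_{0,m})\neq 0$ and picking any column $\theta_{0,j}\neq 0$, we get $\nabla f\cdot\theta_{0,j}=0$ almost everywhere, so by Fubini $f$ is constant along almost every line parallel to $\theta_{0,j}$; membership in $L^p(\Rn)$ forces those constants to vanish and hence $f\equiv 0$, contradicting the hypothesis. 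Once the uniform positivity is in hand, $\LYZP f\in\conbodio[nm]$ follows by two polar sandwich estimates: the bound $h_{\LYZ f}(\theta)\geq c_0$ on $\S$ gives $\LYZ f\supset c_0\,\B[nm]$, hence $\LYZP f\subset c_0^{-1}\B[nm]$, while the upper bound $h_{\LYZ f}(\theta)\leq M\|\nabla f\|_{L^p(\Rn)}$ (obtained via $h_Q\leq M|\cdot|$) shows $\LYZ f$ is bounded, so $0\in\mathrm{int}(\LYZP f)$.

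\emph{Main obstacle.} The routine half is the convergence; the genuine subtlety is the strict positivity step, which relies on the zero set of $h_Q$ being trivial. If $0\in\partial Q$ strictly, $h_Q$ vanishes on the non-trivial cone polar to the tangent cone of $Q$ at the origin, and one can then exhibit a non-zero $f\in W^{1,p}(\Rn)$ whose gradient lies in that cone along some direction $\theta$ (for instance a function monotone in a fixed direction with suitable decay), breaking the uniform lower bound. I therefore expect the argument at this step to proceed under $Q\in\conbodio[m]$, or to supplement itself by an explicit reduction to this interior case.
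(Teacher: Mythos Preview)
Your convergence argument is correct and essentially identical to the paper's (the paper writes $|x^t\theta|\leq\lambda|x|$ for some $\lambda>0$ by compactness; your explicit $\lambda=1$ via the Frobenius norm is fine).

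The positivity argument, however, has a genuine gap, and your ``Main obstacle'' paragraph misdiagnoses it. The lemma is stated for $Q\in\conbodo[m]$, i.e.\ the origin may lie on $\partial Q$; your argument that $h_Q^{-1}(0)=\{0\}$ requires $0\in\operatorname{int}(Q)$ and so does not cover the stated hypothesis. More importantly, your suspicion that the lemma actually \emph{fails} when $0\in\partial Q$ is incorrect: your proposed counterexample (``a function monotone in a fixed direction with suitable decay'') cannot exist in $W^{1,p}(\Rn)$, because a function in $L^p(\R)$ that is monotone on the whole line must vanish identically, and Fubini then forces $f\equiv 0$. This is exactly the mechanism that rescues the general case, and it is the idea you are missing.

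The paper's route is as follows. Since $Q$ is a convex \emph{body} in $\R^m$ containing the origin, it contains points $\gamma u_1,\dots,\gamma u_m$ with $u_1,\dots,u_m\in\mathbb{S}^{m-1}$ linearly independent and $\gamma>0$; hence
\[
h_Q(x)\geq \gamma\max_{1\leq i\leq m}(x^tu_i)_+.
\]
For $\theta\in\S$ the vectors $\theta u_1,\dots,\theta u_m\in\Rn$ cannot all vanish (linear independence of the $u_i$), so by compactness there is $\tau>0$ and, for each $\theta$, an index $i_*$ with $|\theta u_{i_*}|>\tau$. Setting $v=\theta u_{i_*}/|\theta u_{i_*}|\in\s$ gives
\[
\|h_Q(\nabla f(\cdot)^t\theta)\|_{L^p(\Rn)}\geq \gamma\tau\,\|(\nabla f(\cdot)^t v)_+\|_{L^p(\Rn)}.
\]
The strict positivity of the right-hand side for every $v\in\s$ and every nonzero $f\in W^{1,p}(\Rn)$ is precisely the one-sided directional statement (the first part of \cite[Lemma~2]{HS2009}), which encodes the monotonicity obstruction above. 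Combined with continuity in $\theta$ and compactness of $\S$, this yields the uniform lower bound $c_0>0$ without any interior assumption on $Q$.
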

\begin{proof}
	Since $Q\in \conbodo[m]$, there exist $M>0$ such that $Q\subset MB_2^m$. In particular, using the definition of support function, we get that for all $x,y\in \R^m$,
	\begin{equation}\label{eq 8.15.1}
		|h_Q(x)-h_Q(y)|\leq M|x-y|.
	\end{equation}
Since $\S$ is compact, there exists $\lambda>0$ such that 
	\begin{equation}\label{eq 8.15.2}
		|x^t\theta|\leq \lambda |x|, \qquad \text{for all } x\in \R^n \text{ and all } \theta\in \S.
	\end{equation}
	
	Using \eqref{eq 8.15.1} and \eqref{eq 8.15.2}, if $p\geq 1$ and $f\in W^{1,p}(\Rn)$,
	\begin{equation}
		\begin{aligned}
			\|h_Q(\nabla f_k(\cdot)^t\theta) - h_Q(\nabla f(\cdot)^t\theta)\|_{L^p(\Rn)}&\leq  M\|\nabla (f_k-f)(\cdot)^t\theta) \|_{L^p(\Rn)}\\
			&\leq M\lambda \|\nabla (f_k-f)\|_{L^{p}(\Rn)}\\
			&\rightarrow 0,
		\end{aligned}
	\end{equation}
	uniformly in $\theta\in \S$.
	
	It remains to show \eqref{eq 8.15.3}. Since $Q\in \conbodo[m]$, there exist linearly independent $u_1, \cdots, u_m\in \mathbb{S}^{m-1}$ and $\gamma>0$ such that $\gamma u_i\in Q$. This implies,
	\begin{equation}\label{eq 8.15.4}
		h_Q(x)\geq \gamma \max\{(x^tu_1)_+, \dots, (x^tu_m)_+\}.
	\end{equation}
	Note that for every $\theta\in \S$, we have
	\begin{equation}
		\max\{|\theta u_1|, \dots, |\theta u_m|\}>0,
	\end{equation}
	since $u_1, \dots, u_m$ are linearly independent and $\theta$ is not the zero map. This, when combined with the fact that continuous functions on a compact set achieve minima, implies the existence of $\tau>0$ such that for all $\theta\in \S$, 
	\begin{equation}\label{eq 8.15.7}
		\max\{|\theta u_1|, \dots, |\theta u_m|\}>\tau.
	\end{equation}
	In particular, this means there exists $u_{i_*}$ (for each $\theta$) such that $|\theta u_{i_*}|>\tau$.
	
	 It is simple to see the map
	\begin{equation}
		\theta\mapsto \| h_Q(\nabla f(\cdot)^t\theta)\|_{L^p(\Rn)}
	\end{equation}
	is continuous on $\S$. Moreover, by \eqref{eq 8.15.4} and \eqref{eq 8.15.7},
	\begin{equation}
	\begin{aligned}
		\| h_Q(\nabla f(\cdot)^t\theta)\|_{L^p(\Rn)}&\geq \gamma \max_{1\leq i\leq m}\|(\nabla f(\cdot)^t\theta u_i)_+\|_{L^p(\Rn)}\\
		& = \gamma |\theta u_{i_*}|\left\|\left(\nabla f(\cdot)^t\frac{\theta u_{i_*}}{|\theta u_{i_*}|}\right)_+\right\|_{L^p(\Rn)}\\
		&\geq \gamma \tau\left\|\left(\nabla f(\cdot)^t\frac{\theta u_{i_*}}{|\theta u_{i_*}|}\right)_+\right\|_{L^p(\Rn)}\\
		&>0.
	\end{aligned}		
	\end{equation} 
	Here, the last line follows from the first part of the proof in \cite[Lemma 2]{HS2009}. Consequently, the compactness of $\S$ implies the existence of $c_0>0$ such that \eqref{eq 8.15.3} holds.
\end{proof}

We are now ready to apply the geometric inequality, Lemma \ref{l:body}, to prove Theorem \ref{t:mps}. Let $f\in C_0^\infty (\Rn)$. Then, by Sard's theorem, one has for a.e. $t>0$ that $$[f]_t:=\{|f|\geq t\}$$ is a bounded, closed set with $C^1$ boundary $$\partial [f]_t=\{|f|=t\}.$$ 

\begin{proof}[Proof of Theorem~\ref{t:mps}, the inequality]
We assume $f$ is not constantly 0 (up to a set of measure 0), since otherwise, the desired inequality is trivial. 

We will prove the case when $f\in C_0^\infty(\Rn)$.

Recall that by Sard's theorem, for almost all $t\in (0, \|f\|_{\infty})$ we have $\nabla f(y)\neq o$ for $y\in \partial [f]_t$. For those $t$, consider the finite Borel measure $\nu_t$ on $\s$ such that for all $g\in C(\s)$, we have
\begin{equation}
\label{eq:convexifying_measure}
	\int_{\s} g(v)d\nu_t(v) = \int_{\partial [f]_t} g\left(\frac{\nabla f(y)}{|\nabla f (y)|} \right) |\nabla f(y)|^{p-1} d\mathcal{H}^{n-1}(y).
\end{equation}
Note that $\nu_t$ is not concentrated in any closed hemisphere. This can be seen by taking $g(v) = (v^tu)_+$ for an arbitarily fixed $u\in \s$ and noting that the integral is positive, since $f\in C_0^\infty(\R^n)$ and the outer unit normals of $\partial [f]_t$ form $\s$. In fact, on $\partial[f]_t$, one has that 
$$n(y):=n_{\partial[f]_t}(y)=\frac{\nabla f(y)}{|\nabla f (y)|}=\sigma_f(y).$$
By Lemma~\ref{l:body}, there exists a convex body, which we denote by $\langle f \rangle_{t,p}\in \conbodo[n]$, such that
\begin{equation} \label{eq 8.13.5}
	\Vol(\PP \nu_t ) \leq \Vol(\PP \B ) \vol(\B)^{\frac{nm} p -m} \vol(\langle f \rangle_{t,p})^{m}
\end{equation}
and
\begin{equation}
	1 = \frac{1}{n} \int_{\s} h_{\langle f \rangle_{t,p}}(v)^p d\nu_t.
\end{equation}

 Observe that, from the coarea formula \eqref{coarea}, Minkowski's integral inequality, and \eqref{eq 8.13.5}:
 \begin{equation}\label{eq:our_split_affine_sobolev}
	\begin{aligned}
		  &d_{n,p}(Q)^{-p} \mathcal{E}_p^p(Q, f) \\
    =& \left(\int_{\S} \left( \int_0^{\infty} \int_{\partial[f]_t} h_{Q}^p\left(n(y)^t\theta\right) |\nabla f(y)|^{p-1}d\mathcal{H}^{n-1}(y)dt \right)^{-\frac{nm}{p}}d\theta \right)^{-\frac{p}{nm}}\\
  =& \left(\int_{\S} \left( \int_0^{\infty} \int_{\s} h_{Q}^p\left(v^t\theta\right) d\nu_t(v)dt \right)^{-\frac{nm}{p}} d\theta\right)^{-\frac{p}{nm}}\\
  =& \left(\int_{\S} \left( \int_0^{\infty} \|\theta\|_{\PP \nu_t}^p dt \right)^{-\frac{nm}{p}} d\theta\right)^{-\frac{p}{nm}}\\
  \geq& \int_0^{\infty} \left(\int_{\S}  \|\theta\|_{\PP \nu_t}^{-nm} d\theta \right)^{-\frac{p}{nm}}dt\\
  =& (nm)^{-\frac{p}{nm}}\int_0^{\infty} \Vol(\PP \nu_t)^{-\frac{p}{nm}} dt\\
  \geq& (nm\vol(\PP \B))^{-\frac{p}{nm}}\omega_n^{\frac{p-n}{n}} \int_{0}^{\infty} \vol(\langle f \rangle_{t,p})^{-\frac{p}{n}}dt.
	\end{aligned}
\end{equation}

Inserting the definition of  $d_{n,p}(Q)$ (see \eqref{eq:sharp}), we obtain
\begin{equation}
\label{eq:zeroth_bound}
\mathcal{E}_p(Q,f) \geq  n^\frac{1}{p}\omega_n^{\frac{1}{n}} \left(\int_0^\infty \vol(\langle f \rangle_{t,p})^{-\frac{p}{n}}dt\right)^\frac{1}{p}.
\end{equation}
Suppose $p=1$. Then, one obtains from \eqref{eq:mixed_smooth} and \eqref{eq:Firey_Min_first}
\begin{align*}
    1 &= \frac{1}{n} \int_{\s} h_{\langle f \rangle_{t,1}} d\nu_t = \frac{1}{n}\int_{\partial [f]_t} h_{\langle f \rangle_{t,1}}(n(y)) d\mathcal{H}^{n-1}(y)
    \\
    &= V_{1}([f]_t,\langle f \rangle_{t,1}) \geq \vol([f]_t)^\frac{n-1}{n}\vol(\langle f \rangle_{t,1})^\frac{1}{n}
    \\
    &= \mu_f(t)^\frac{n-1}{n}\vol(\langle f \rangle_{t,1})^\frac{1}{n},
\end{align*}
where the last equality follows from \eqref{eq:distribution_function}.  Similarly, if $p>1$, we have from H\"older's inequality and again \eqref{eq:mixed_smooth} and \eqref{eq:Firey_Min_first}
\begin{align*}
    &n^\frac{1}{p}\left(\int_{\partial [f]_t}|\nabla f(y)|^{-1}d\mathcal{H}^{n-1}(y)\right)^\frac{p-1}{p}
    \\
    &= \left(\int_{\partial [f]_t}h_{\langle f \rangle_{t,p}}(n(y))^p|\nabla f(y)|^{p-1}d\mathcal{H}^{n-1}(y)\right)^\frac{1}{p}\left(\int_{\partial [f]_t}|\nabla f(y)|^{-1}d\mathcal{H}^{n-1}(y)\right)^\frac{p-1}{p}
    \\
    &\geq \int_{\partial [f]_t}h_{\langle f \rangle_{t,p}}(n(y))d\mathcal{H}^{n-1}(y)
    \geq n\vol([f]_t)^\frac{n-1}{n}\vol(\langle f \rangle_{t,p})^\frac{1}{n}
    \\
    &= n\mu_f(t)^\frac{n-1}{n}\vol(\langle f \rangle_{t,p})^\frac{1}{n}.
\end{align*}
In either case, we deduce that for $p\geq 1$
\begin{equation}
    \vol(\langle f \rangle_{t,p})^{-\frac{p}{n}} \geq \mu_f(t)^{\frac{p}{n}(n-1)}\left(\frac{1}{n}\int_{\partial [f]_t}|\nabla f(y)|^{-1}d\mathcal{H}^{n-1}(y)\right)^{1-p}.
    \label{eq:first_bound}
\end{equation}
Notice there is equality if and only if $\langle f \rangle_{t,p}$ is homothetic to $[f]_t$ for a.e. $t>0$. Inserting this into \eqref{eq:zeroth_bound}, we obtain

\begin{equation}
    \label{eq:second_bound}
    \mathcal{E}_p(Q,f) \geq  n^\frac{1}{p}\omega_n^{\frac{1}{n}} \left(\int_0^\infty \mu_f(t)^{\frac{p}{n}(n-1)}\left(\frac{1}{n}\int_{\partial [f]_t}|\nabla f(y)|^{-1}d\mathcal{H}^{n-1}(y)\right)^{1-p}dt\right)^\frac{1}{p}.
\end{equation}

Next, from another use of the coarea formula \eqref{coarea}, we have (see e.g. \cite{BZ88,CLYZ09})
$$\mu_f(t) = \vol\left([f]_t\cap \{\nabla f =0\}\right)+\int_t^\infty\int_{\{|f|=\tau\}}|\nabla f(y)|^{-1}d\mathcal{H}^{n-1}(y)d\tau.$$
Notice both terms on the right-hand side are nonincreasing. Therefore, we may differentiate in the variable $t$ and throw-away the first term to deduce
\begin{equation}
    -\mu^\prime_f(t) \geq \int_{\partial [f]_t}|\nabla f(y)|^{-1}d\mathcal{H}^{n-1}(y), \quad \text{for a.e. } t>0.
    \label{eq:bounds_distribution_deriv}
\end{equation}
One has, see e.g. \cite[Lemmas 2.4 and 2.6]{CF02}, that equality holds in \eqref{eq:bounds_distribution_deriv} if and only if $f=f^\star$.

Inserting \eqref{eq:bounds_distribution_deriv} into \eqref{eq:second_bound} yields
\begin{equation}
    \label{eq:fourth_bound}
    \mathcal{E}_p(Q,f) \geq n\omega_n^{\frac{1}{n}} \left(\int_0^\infty \frac{\mu_f(t)^{\frac{p}{n}(n-1)}}{(-\mu^\prime_f(t))^{p-1}}dt\right)^\frac{1}{p}.
\end{equation}
By concatenating all the equality cases mentioned above, equality in \eqref{eq:fourth_bound}  holds if $f$ is replaced by $f^*$. By the fact that $\mu_f(t) = \mu_{f^*}(t)$, this completes the proof in the case of $f\in C_0^\infty$.

The general case then follows from approximation. This is essentially the same as \cite[Theorem 2.1]{CLYZ09}, but we recall the approximation for the convenience of the reader. Given $f\in W^{1,p}(\Rn),$ let $\{f_k\}$ be a sequence of $C_0^\infty(\Rn)$ functions that converge to $f$ in $W^{1,p}(\Rn)$. By the first part of the proof, for all $k=1,2,\dots$,
\begin{equation}
\label{eq:smooth_inequality_approx}
\mathcal{E}_p(Q,f_k^*) \leq \mathcal{E}_p(Q,f_k)\end{equation}
Lemma~\ref{lemma 8.15.1} and the definition of $\mathcal{E}_p(Q, \cdot)$ show that 
\begin{equation}
\label{eq:smooth_limit}
\lim_{k\to\infty}\mathcal{E}_p(Q,f_k)=\mathcal{E}_p(Q,f).\end{equation}

On the other hand, since rearrangements are contractive in $L^p(\Rn)$ (see \cite{CG79}), we know $f_k^*\rightarrow f^*$ in $L^p(\Rn)$ and consequently $f_k^\star\to f^\star$ weakly in $W^{1,p}(\Rn)$. This, when combined with Theorem~\ref{t:relate} and the fact that the $L^p$ gradient norm is lower-semicontinuous with respect to weak convergence in $W^{1,p}(\Rn)$, shows
\begin{equation}
\begin{split}
	\mathcal{E}_p(Q,f)=\lim_{k\to\infty}\mathcal{E}_p(Q,f_k)&\geq \liminf_{k\to\infty}\mathcal{E}_p(Q,f_k^*)= \liminf_{k\to\infty}\|\nabla f_k^*\|_{L^p(\Rn)}
 \\
 &\geq \|\nabla f^*\|_{L^p(\Rn)}=\mathcal{E}_p(Q,f^*).
 \end{split}
\end{equation}

We save the equality conditions for Section~\ref{sec:equality} below.

\end{proof}
\begin{remark}
Following \cite{CLYZ09,LXZ11,LYZ02,GZ99}, we call the sets the $\langle f \rangle_{t,p}$ the \textit{asymmetric convexification of $[f]_t$}.  We isolate from \eqref{eq:our_split_affine_sobolev} the following inequality:

\begin{align*}
   \mathcal{E}_p^p(Q, f) & > n\omega_n\Vol(\PP \B )^\frac p {nm} \int_0^{\infty} \Vol(\PP \nu_t)^{-\frac{p}{nm}} dt\\
  &\geq n\omega_n^\frac{p}{n} \int_{0}^{\infty} \vol(\langle f \rangle_{t,p})^{-\frac{p}{n}}dt.
\end{align*}
This may be viewed as the higher-order analog of the convex Lorentz-Sobolev inequality found in \cite{LXZ11,XJ07}.
\end{remark}

\subsection{The equality conditions in Theorem~\ref{t:mps}}
\label{sec:equality}
This section is dedicated to proving the following statement:
     Let $p> 1$, $f\in W^{1,p}(\Rn)$, and $Q\in\conbodo[m]$. Suppose $f$ satisfies the minor regularity condition \eqref{eq:regularity}. Then, $\mathcal{E}_p(Q,f) = \mathcal{E}_p(Q,f^*)$ if and only if $f(x)=f^E(x+x_0)$ for some $x_0\in\Rn$ and origin-symmetric ellipsoid $E\in \conbodio[n]$.

Our approach is heavily inspired by \cite{NVH16}. Our first step is to introduce some notation.
\begin{definition}
  \label{d:generalcentroidbody}
  Let $p \geq 1$, $m \in \N$, and fix some $Q \in \conbodo[m]$. Given a compact set $L\subset \R^{nm}$ with positive volume, we define the $(L^p, Q)$-centroid body of $L$, $\G L$, to be the convex body in $\Rn$ with the support function 
  \begin{equation}
  h_{\G L}(v)^p= \frac 1 {\Vol(L)}\int_L h_Q(v^tx)^p dx.
  \label{eq:cen_hi}
\end{equation}
\end{definition}
This of course extends on the classical centroid body operators, which we will discuss in more detail further below. We now derive a different formula for $\mathcal{E}_p(Q,f)$.

\begin{lemma}
\label{l:yet_another_formula}
Let $p\geq 1,m\in\N$, $f\in W^{1,p}(\Rn)$ not identically zero, and $Q\in\conbodo[m]$. Then,
$$\mathcal{E}_p(Q,f) = d_{n,p}(Q) \left(\Vol(\LYZP f)(nm+p) \int_{\R^n}h_{\G \LYZP f}(\nabla f(x))^p dx\right)^{-\frac{1}{nm}}.$$
\end{lemma}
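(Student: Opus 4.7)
The plan is to reduce the claimed identity to equation~\eqref{eq:relation_E} by evaluating the integral $\int_{\R^n}h_{\G \LYZP f}(\nabla f(x))^p dx$ explicitly. Comparing \eqref{eq:relation_E} with the target expression, the identity is equivalent to showing
\begin{equation*}
\int_{\R^n}h_{\G \LYZP f}(\nabla f(x))^p dx = \frac{nm}{nm+p},
\end{equation*}
so the entire argument reduces to proving this one equality.

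First, I would apply Definition~\ref{d:generalcentroidbody} with $L = \LYZP f$ (which is a genuine convex body in $\conbodio[nm]$ by Lemma~\ref{lemma 8.15.1}, so $\Vol(L) > 0$) and interchange the order of integration via Fubini's theorem:
\begin{equation*}
\int_{\R^n}h_{\G \LYZP f}(\nabla f(x))^p dx = \frac{1}{\Vol(\LYZP f)}\int_{\LYZP f} \left(\int_{\R^n} h_Q((\nabla f(x))^t y)^p dx\right) dy.
\end{equation*}
By Definition~\ref{d:generalLYZbody_2} and the $1$-homogeneity of $h_{\LYZ f}$, the inner integral equals $h_{\LYZ f}(y)^p$ for every $y\in \R^{nm}$.

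Next, I would introduce polar coordinates on the body $\LYZP f$. Using the standard identities $\|y\|_{\LYZP f} = h_{(\LYZP f)^\circ}(y) = h_{\LYZ f}(y)$ and the fact that the radial function of $\LYZP f$ at $\theta\in\S$ is $\|\theta\|_{\LYZP f}^{-1}$, we can compute
\begin{align*}
\int_{\LYZP f} h_{\LYZ f}(y)^p dy
&= \int_{\S} \int_0^{\|\theta\|_{\LYZP f}^{-1}} h_{\LYZ f}(\theta)^p\, r^{p+nm-1}\, dr\, d\theta \\
&= \frac{1}{p+nm}\int_{\S} \|\theta\|_{\LYZP f}^p \cdot \|\theta\|_{\LYZP f}^{-(p+nm)}\, d\theta \\
&= \frac{1}{p+nm}\int_{\S}\|\theta\|_{\LYZP f}^{-nm}\, d\theta \\
&= \frac{nm}{p+nm}\,\Vol(\LYZP f),
\end{align*}
where the last step is the polar coordinate formula \eqref{eq:polar_coordinates} for the volume of $\LYZP f\in\conbodio[nm]$. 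Dividing by $\Vol(\LYZP f)$ yields the required identity, and substituting back into \eqref{eq:relation_E} gives the stated formula for $\mathcal{E}_p(Q,f)$.

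There is no real obstacle in this argument; the only thing to be careful about is matching exponents in the polar coordinate expansion (the $p$ from the $1$-homogeneity of $h_Q$ combining with $nm-1$ from the Jacobian to produce the crucial factor $nm/(nm+p)$) and correctly identifying the Minkowski functional of $\LYZP f$ with the support function of $\LYZ f$.
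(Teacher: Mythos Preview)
Your proposal is correct and follows essentially the same approach as the paper: apply the definition of $\G$ with Fubini to recognize the inner integral as $h_{\LYZ f}(y)^p = \|y\|_{\LYZP f}^p$, then use polar coordinates and \eqref{eq:polar_coordinates} to obtain the factor $\tfrac{nm}{nm+p}$, and finally invoke \eqref{eq:relation_E}. The only cosmetic difference is that the paper carries the computation through to $\mathcal{E}_p(Q,f)^{-nm}d_{n,p}(Q)^{nm}$ directly rather than first isolating the identity $\int_{\R^n}h_{\G \LYZP f}(\nabla f(x))^p dx = \tfrac{nm}{nm+p}$.
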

\begin{proof}
    By definition, we have from Fubini's theorem and polar coordinates
    {\allowdisplaybreaks\begin{align*}
        &\int_{\R^n}h_{\G \LYZP f}(\nabla f(x))^p dx \\
        =& \frac 1 {\Vol(\LYZP f)}\int_{\LYZP f} \int_{\R^n} h_Q((\nabla f(y)^tx)^p dy dx
        \\
        =&\frac 1 {\Vol(\LYZP f)}\int_{\LYZP f} \|x\|^p_{\LYZP f} dx
        \\
        =&\frac 1 {\Vol(\LYZP f)} \int_{\S} \|\theta\|^p_{\LYZP f} \int_0^{\|\theta\|^{-1}_{\LYZP f}}r^{nm-1+p}drd\theta
        \\
        =&\frac 1 {\Vol(\LYZP f)(nm+p)} \int_{\S} \|\theta\|^{-nm}_{\LYZP f} d\theta
        \\
        =&\frac 1 {\Vol(\LYZP f)(nm+p)} \int_{\S}\left(\int_{\R^n}h_Q((\nabla f(x))^t\theta )^p dx\right)^{-\frac{nm}{p}}d\theta
        \\
        =&\frac{\mathcal{E}_p(Q,f)^{-nm}d_{n,p}(Q)^{nm}}{\Vol(\LYZP f)(nm+p)}.
    \end{align*}}
\end{proof}

Our next step is to replace $\Vol(\LYZP f)$ with $\vol(\G\LYZP f)$. We will need the following from \cite[Theorem 1.6]{HLPRY23_2}. We suppress the definition of star body or compact domain; simply note that this applies to elements of $\conbodio[nm]$.
\begin{theorem}[The $m$th-order $L^p$ Busemann-Petty inequality]
    \label{t:NewLpBPC} Fix $n,m \in \N$. 
Let $L\subset \R^{nm}$ be a compact set with positive volume, $Q \in \conbodo[m]$, and $p \geq 1$. Then
    \begin{equation}\label{eq:NewLpBPC}
     \frac{\vol(\G L)}{\Vol(L)^{1/m}} \geq \frac{\vol(\G \PP \B)}{\Vol(\PP \B)^{1/m}}.
\end{equation}
If $L$ is a star body or a compact set with piecewise smooth boundary, then there is equality if and only if $L = \PP E$ up to a set of zero volume for some origin-symmetric ellipsoid $E \in \conbodio[n]$.
\end{theorem}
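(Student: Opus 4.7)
The plan is to prove this as the $m$th-order analogue of the classical $L^p$ Busemann-Petty centroid inequality of Lutwak-Yang-Zhang (the case $m=1$, $Q=[-1/2,1/2]$), via a shadow-system/Steiner-symmetrization argument in the style of Campi-Gronchi lifted to $\R^{nm}$.

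First, via polar coordinates on $\R^{nm}$ and the $1$-homogeneity of $h_Q$,
\[
h_{\G L}(v)^p = \frac{1}{\Vol(L)(nm+p)}\int_\S \rho_L(\theta)^{nm+p}\,h_Q(v^t\theta)^p\,d\theta,
\]
so $\G L$ and $\Vol(L)$ depend on $L$ only through its radial function $\rho_L(\theta)=\sup\{t\ge 0:t\theta\in L\}$, and I may assume $L$ is a star body. For $\phi\in\SL(n)$, the map $\overline\phi\in\SL(nm)$ preserves $\Vol$, and the identity $v^t\overline\phi\, x=(\phi^t v)^t x$ (see the matrix manipulations preceding the proof of Theorem~\ref{t:relate}) gives $\G(\overline\phi L)=\phi^{-1}\G L$, so the ratio $\vol(\G L)/\Vol(L)^{1/m}$ is invariant under $L\mapsto\overline\phi L$.

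Next, the heart of the argument: a shadow-system step in $\R^{nm}$. For each $u\in\s$, I would construct a one-parameter family $L_t$, $t\in[-1,1]$, obtained by sliding the fibers of $L$ along a ``diagonal'' lifted direction built from $u$ (for instance $(u,\dots,u)/\sqrt{m}\in\S$, compatible with the $\overline{O(n)}$-action). The two crucial claims are (i) $\Vol(L_t)$ is constant in $t$ (Fubini), and (ii) $\vol(\G L_t)$ is convex in $t$, so that its minimum is attained at the Steiner symmetral $L_0$. Iterating over $u\in\s$ yields a Hausdorff-limit star body $L^\star$ invariant under $\{\overline\phi:\phi\in O(n)\}$ with $\Vol(L^\star)=\Vol(L)$ and $\vol(\G L^\star)\leq\vol(\G L)$. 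Among such $\{\overline\phi\}$-invariant star bodies, the radial function depends only on the block Gram matrix $(x_i^t x_j)$; combined with the $\SL(n)$-invariance above, this pins the minimizer down, up to $\SL(n)$-action, as $\PP E$ for an origin-symmetric ellipsoid $E$, and direct evaluation identifies the constant as $\vol(\G\PP\B)/\Vol(\PP\B)^{1/m}$. Tracing back the equality cases---sharpness in (ii) for every $u$ plus the symmetry identification---then shows that $L$ itself must coincide with $\PP E$ up to a set of measure zero.

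The main obstacle is the convexity claim (ii). Via the polar formula above and Minkowski's integral inequality, it reduces to convexity of $h_{\G L_t}(v)^{-n}$ in $t$ for each fixed $v\in\s$, which further reduces to a Brunn-type concavity statement for the mixed radial integral $\int_\S \rho_{L_t}(\theta)^{nm+p}h_Q(v^t\theta)^p\,d\theta$ along the shadow family. Establishing this with sharp equality tracking---so that only $\PP E$ for origin-symmetric ellipsoids $E$ saturate the whole chain---is the technical heart of the argument. An alternative route would be to mimic Lutwak-Yang-Zhang's direct Jensen-inequality proof of the $m=1$ case, replacing the one-variable step with an $nm$-dimensional Minkowski-type inequality; this usually yields the inequality cleanly but makes the equality analysis more delicate.
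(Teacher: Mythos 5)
The paper does not prove Theorem~\ref{t:NewLpBPC}: it is imported verbatim from \cite[Theorem~1.6]{HLPRY23_2}, where the argument proceeds by first establishing the $m$th-order Petty projection inequality via Steiner symmetrization of the convex body $K$ in the ambient space $\R^n$, and then deducing the centroid-body inequality by Lutwak's class-reduction duality; it does not symmetrize $L$ directly in $\R^{nm}$. Your blind proposal does symmetrize $L$ in $\R^{nm}$, and this meets a concrete obstruction at the symmetry-identification step. Steiner symmetrization in the one-dimensional direction $e_u=(u,\dots,u)/\sqrt m$ creates reflection symmetry across the hyperplane $e_u^\perp\subset\R^{nm}$, but $R_{e_u}\notin\{\overline\phi:\phi\in O(n)\}$. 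For instance, with $n=m=2$ and $u=(1,0)$, one has $R_{e_u}(x_1,x_2,x_3,x_4)=(-x_3,x_2,-x_1,x_4)$, which swaps and negates blocks, whereas $\overline{R_u}(x_1,x_2,x_3,x_4)=(-x_1,x_2,-x_3,x_4)$. Iterating over the $(n-1)$-parameter family $\{e_u:u\in\s\}$ inside the $(nm-1)$-dimensional sphere $\S$ therefore cannot be expected to converge to an $\overline{O(n)}$-invariant body, so the claimed limit $L^\star$ and the reduction to radial functions of the block Gram matrix do not follow. Even granting $\overline{O(n)}$-invariance, such star bodies form an infinite-dimensional family (one free function of the $m\times m$ Gram matrix of $\theta$), and nothing in the argument forces the extremal one to be $\PP E$.

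The second gap is the one you flag yourself: the convexity of $\vol(\G L_t)$ along the shadow family is asserted but not reduced to anything known. In the $m=1$ Campi--Gronchi treatment, the convexity hinges on the centroid body and the shadow system living in the \emph{same} $\R^n$; here $L_t\subset\R^{nm}$ while $\G L_t\subset\R^n$, and it is not clear that a one-dimensional slide of $L$ yields the needed Brunn-type concavity for $\int_\S\rho_{L_t}(\theta)^{nm+p}h_Q(v^t\theta)^p\,d\theta$, nor that equality in it forces $L=\PP E$. Without this the symmetrization scheme has no engine. A minor correction along the way: the contravariance computes to $\G(\overline\phi\,L)=\phi\,\G L$, not $\phi^{-1}\G L$ (writing $h_{\G L}(\phi^t v)=h_{\phi\G L}(v)$); this does not affect the $\SL(n)$-invariance of the ratio, which also uses $\det\overline\phi=(\det\phi)^m=1$.
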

This of course extends the known $m=1$ cases: the classical $Q=[-\frac{1}{2},\frac 12]$ results from Busemann and Petty \cite{Busemann53,petty61_1} when $p=1$ and from Lutwak, Yang and Zhang \cite{LYZ00} when $p>1.$ By setting $Q = [-\alpha_1, \alpha_2]$, $\alpha_1,\alpha_2>0$ we obtain the asymmetric $L^p$ case by Haberl and Schuster \cite{HS09}. 

Setting $L=\LYZP f$ in \eqref{eq:NewLpBPC}, one obtains
\begin{equation}
\label{eq:LPBC_applied}
\Vol(\LYZP f)^{1/m} \leq \frac{\Vol(\PP \B)^{1/m}}{\vol(\G \PP \B)}\vol(\G \LYZP f),\end{equation}
with equality if and only if $\LYZP f=\PP E$ for some origin-symmetric ellipsoid $E\in\conbodio[n]$. To work with these equality conditions we use \cite[Lemma 3.12]{HLPRY23_2}: $\G \circ \PP$ is bijective on the class of centered ellipsoids, in the sense that
\begin{equation}
\label{eq:elipp_cal}
\G \PP E = \omega_n^{\frac{1}{p}}\vol(E)^{-\frac 1 {p}} \left(\frac m {\omega_n(nm+p)}\right)^{\frac 1p } E.\end{equation}
Combining this with Lemma~\ref{l:yet_another_formula}, we obtain the following.

\begin{proposition}
\label{prop:BP_energy}
Let $p\geq 1,m\in\N$, $f\in W^{1,p}(\Rn)$ not identically zero, and $Q\in\conbodo[m]$. Then,
    $$\mathcal{E}_p(Q,f) \geq \left(\frac{\omega_n}{\vol(\G \LYZP f)}\right)^\frac{1}{n}\left(\int_{\R^n}h_{\G \LYZP f}(\nabla f(x))^p dx\right)^{\frac{1}{p}}.$$
    with equality if and only if $\G \LYZP f = E$ for some origin-symmetric ellipsoid $E\in\conbodio[n]$.
\end{proposition}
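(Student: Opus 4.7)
The plan is to combine the identity of Lemma \ref{l:yet_another_formula} with the $m$th-order $L^p$ Busemann--Petty inequality (Theorem \ref{t:NewLpBPC}) applied to $L = \LYZP f$, and then verify that the constants collapse to the claimed form via the explicit formula \eqref{eq:elipp_cal} for $\G\circ\PP$ on centered ellipsoids.

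First I would rewrite Lemma \ref{l:yet_another_formula} as
\[
\mathcal{E}_p(Q,f)^{nm} = \frac{d_{n,p}(Q)^{nm}}{(nm+p)\Vol(\LYZP f)\, I},
\]
where $I := \int_{\Rn} h_{\G \LYZP f}(\nabla f(x))^p dx$. Applying Theorem \ref{t:NewLpBPC} to $L = \LYZP f$ yields
\[
\Vol(\LYZP f) \leq \frac{\Vol(\PP \B)}{\vol(\G \PP \B)^m}\vol(\G \LYZP f)^m,
\]
with equality iff $\LYZP f = \PP E$ for some origin-symmetric ellipsoid $E\in\conbodio[n]$ (using that $\LYZP f \in \conbodio[nm]$ is in particular a star body). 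Substituting this upper bound gives
\[
\mathcal{E}_p(Q,f)^{nm} \geq \frac{d_{n,p}(Q)^{nm}\vol(\G \PP \B)^m}{(nm+p)\Vol(\PP \B)\vol(\G \LYZP f)^m\, I}.
\]

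It then remains to show the lower bound equals the $nm$-th power of the proposed right-hand side. Using the definition $d_{n,p}(Q)^{nm} = (n\omega_n)^{nm/p}(nm)\Vol(\PP\B)$ and \eqref{eq:elipp_cal} at $E=\B$, which gives $\vol(\G\PP\B)^m = \omega_n^m\bigl(\tfrac{m}{\omega_n(nm+p)}\bigr)^{nm/p}$, one computes the prefactor to be
\[
\frac{d_{n,p}(Q)^{nm}\vol(\G \PP \B)^m}{(nm+p)\Vol(\PP \B)} = \omega_n^m\left(\frac{nm}{nm+p}\right)^{nm/p+1}.
\]
A short Fubini-plus-polar-coordinates calculation (swap the $x$ and $y$ integrals, use $h_{\LYZ f}(y) = \|y\|_{\LYZP f}$ from Definition \ref{d:generalLYZbody_2}, then apply \eqref{eq:polar_coordinates} in $\R^{nm}$) produces the identity
\[
I = \frac{1}{\Vol(\LYZP f)}\int_{\LYZP f}\|y\|_{\LYZP f}^p dy = \frac{nm}{nm+p}.
\]
Dividing the prefactor by this value of $I$ cancels one copy of $nm/(nm+p)$, leaving $\omega_n^m I^{nm/p}$, so the chain above becomes $\mathcal{E}_p(Q,f)^{nm} \geq \omega_n^m\vol(\G \LYZP f)^{-m}I^{nm/p}$. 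Taking $nm$-th roots completes the inequality.

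For the equality characterization, Theorem \ref{t:NewLpBPC} forces $\LYZP f = \PP E$ for some origin-symmetric ellipsoid $E\in\conbodio[n]$, and then \eqref{eq:elipp_cal} makes $\G \LYZP f = \G\PP E$ a positive dilate of $E$, hence itself an origin-symmetric ellipsoid. Conversely, if $\G\LYZP f$ is an origin-symmetric ellipsoid, the bijectivity of $\G\circ\PP$ on centered ellipsoids recorded in \eqref{eq:elipp_cal} recovers $\LYZP f = \PP E'$ for some origin-symmetric ellipsoid $E'$, and so the Busemann--Petty step is an equality. The main obstacle is purely arithmetic---keeping track of the interlocking powers of $n$, $m$, $p$, and $\omega_n$ through the cancellations---rather than geometric; all the geometric content is already packaged into Lemma \ref{l:yet_another_formula} and Theorem \ref{t:NewLpBPC}.
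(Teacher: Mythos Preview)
Your argument for the inequality is correct and is essentially the paper's proof, just with the algebra organized differently: the paper writes $\mathcal{E}_p(Q,f)=\mathcal{E}_p(Q,f)^{(nm+p)/p}\mathcal{E}_p(Q,f)^{-nm/p}$ and feeds in both Lemma~\ref{l:yet_another_formula} and \eqref{eq:relation_E}, whereas you compute $I=\tfrac{nm}{nm+p}$ directly from the calculation inside the proof of Lemma~\ref{l:yet_another_formula}. The constant chase via $d_{n,p}(Q)$ and \eqref{eq:elipp_cal} is the same in both.

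There is, however, a genuine gap in your converse equality step. You claim that if $\G\LYZP f$ is an origin-symmetric ellipsoid, then ``the bijectivity of $\G\circ\PP$ on centered ellipsoids \dots\ recovers $\LYZP f=\PP E'$.'' But bijectivity of $\G\circ\PP$ on centered ellipsoids only says that the map $E\mapsto \G\PP E$ is a bijection from ellipsoids to ellipsoids; it gives you an ellipsoid $E'$ with $\G\PP E'=\G\LYZP f$, not the conclusion $\PP E'=\LYZP f$. To pass from $\G\PP E'=\G\LYZP f$ to $\PP E'=\LYZP f$ you would need injectivity of $\G$ on a class of bodies in $\R^{nm}$ containing both, which is a separate (and nontrivial) statement not supplied by \eqref{eq:elipp_cal}. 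The paper's own proof is equally terse here---it simply ends with ``Using \eqref{eq:elipp_cal} completes the proof''---so you are at parity with the paper, but the converse as you have written it is not justified by the cited ingredient.
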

\begin{proof}
    Observe that, by using, Lemma~\ref{l:yet_another_formula}, \eqref{eq:relation_E}, \eqref{eq:LPBC_applied}, and \eqref{eq:sharp},
    {\allowdisplaybreaks\begin{align}
        &\mathcal{E}_p(Q,f) = \mathcal{E}_p(Q,f)^\frac{nm+p}{p} \mathcal{E}_p(Q,f)^{-\frac{nm}{p}}
        \\
        & = \mathcal{E}_p(Q,f)^\frac{nm+p}{p} d_{n,p}(Q)^{-\frac{nm}{p}} \left(\Vol(\LYZP f)(nm+p) \int_{\R^n}h_{\G \LYZP f}(\nabla f(x))^p dx\right)^{\frac{1}{p}}
        \\
        &=d_{n,p}(Q)(nm)^{-\frac{nm+p}{nmp}}(nm+p)^\frac{1}{p}\vol(\LYZP f)^{-\frac{1}{nm}}\left(\int_{\R^n}h_{\G \LYZP f}(\nabla f(x))^p dx\right)^{\frac{1}{p}}
        \\
        &\geq \left(\omega_n\frac{nm+p}{m}\right)^\frac{1}{p}\left(\frac{\vol(\G \PP \B)}{\vol(\G \LYZP f)}\right)^\frac{1}{n}\left(\int_{\R^n}h_{\G \LYZP f}(\nabla f(x))^p dx\right)^{\frac{1}{p}}.
    \end{align}}
    Using \eqref{eq:elipp_cal} completes the proof.
\end{proof}

We have the following two facts. The first can be seen as a P\'olya-Szeg\"o principle for convex symmetrization.
\begin{lemma}[Theorem 1.1 in \cite{FV04}]
\label{l:PS_convex}
    Fix $p>1$ and origin-symmetric $K\in\conbodio[n]$. For any $f\in W^{1,p}(\Rn)$, one has
    \begin{equation}
        \int_{\Rn}h_K(\nabla f(x))^p dx \geq \int_{\Rn}h_{K}(\nabla f^K(x))^p dx.
        \label{eq:convex_PS}
    \end{equation}
    Moreover, if $f$ is nonnegative and satisfies the minor regularity assumption of \begin{equation}\vol\left(\left\{x:\left|\nabla f^{K}(x)\right|=0\right\} \cap\left\{x: 0<f^{K}(x)<\|f\|_{L^\infty(\R^n)}\right\}\right)=0,
\label{eq:regularity_2}
\end{equation}
then equality holds in \eqref{eq:convex_PS} if and only if $f(x)=f^K(x+x_0)$ for some $x_0\in\Rn$.
\end{lemma}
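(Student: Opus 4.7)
My plan is to prove this by the classical layer-cake method, adapted to the anisotropic norm $h_K$ (which is a genuine norm since $K$ is origin-symmetric). First I would reduce to $f \in C_0^\infty(\Rn)$ by density, exactly as in the final approximation step of the proof of Theorem~\ref{t:mps}, relying on the $W^{1,p}$-continuity of $f \mapsto \int h_K(\nabla f)^p\, dx$ and the $L^p$-contractivity of rearrangement from \cite{CG79}. For such $f$, Sard's theorem ensures $\partial[f]_t$ is a $C^1$ hypersurface for a.e.\ $t > 0$, and Federer's coarea formula \eqref{coarea} yields
\[
\int_{\R^n} h_K(\nabla f)^p\, dx = \int_0^\infty \int_{\{|f|=t\}} h_K(n(y))^p\, |\nabla f(y)|^{p-1}\, d\mathcal{H}^{n-1}(y)\, dt,
\]
where $n(y) = \nabla f(y)/|\nabla f(y)|$.

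For each good $t$, the mixed-volume representation \eqref{eq:mixed_smooth} and Minkowski's first inequality \eqref{eq:Firey_Min_first} give
\[
\int_{\{|f|=t\}} h_K(n(y))\, d\mathcal{H}^{n-1}(y) = n\, V_1([f]_t, K) \geq n\, \mu_f(t)^{(n-1)/n}\, \vol(K)^{1/n}.
\]
H\"older's inequality with exponents $p$ and $p/(p-1)$, combined with \eqref{eq:bounds_distribution_deriv}, then produces
\[
\int_{\{|f|=t\}} h_K(n)^p\, |\nabla f|^{p-1}\, d\mathcal{H}^{n-1} \geq \frac{n^p\, \vol(K)^{p/n}\, \mu_f(t)^{p(n-1)/n}}{(-\mu_f'(t))^{p-1}}.
\]
Integrating in $t$ yields a lower bound depending only on the distribution function $\mu_f$. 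For $f^K$, the super-level sets $[f^K]_t$ are exact dilates of $K$ by \eqref{eq:consym}, $h_K(n)\,|\nabla f^K|$ is constant on each $\partial[f^K]_t$, and $-\mu_{f^K}'$ realizes the coarea identity via \cite[Lemmas 2.4, 2.6]{CF02}; hence every step above is an equality for $f^K$. Since $\mu_f = \mu_{f^K}$, the lower bound equals $\int h_K(\nabla f^K)^p\, dx$, proving \eqref{eq:convex_PS}.

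The main obstacle is the equality characterization. Equality forces, for a.e.\ $t$: (i) $[f]_t$ is homothetic to $K$, so $[f]_t = r(t)\, K + x(t)$, with $r(t)$ pinned by $\mu_f(t) = \mu_{f^K}(t)$; (ii) $h_K(n(y))\,|\nabla f(y)|$ is constant on $\partial[f]_t$ (H\"older equality); and (iii) the coarea identity $-\mu_f'(t) = \int_{\{|f|=t\}}|\nabla f|^{-1}\, d\mathcal{H}^{n-1}$ holds. The regularity assumption \eqref{eq:regularity_2} excludes the Brothers--Ziemer \cite{BZ88} counterexample of flat plateaus in the interior of the support where $\nabla f^K = 0$. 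The delicate step is then to conclude that the translation $x(t)$ is independent of $t$, so that $f(x) = f^K(x + x_0)$: one must use (ii) together with the nesting $[f]_{t_1} \supseteq [f]_{t_2}$ for $t_1 < t_2$ to propagate the alignment of the centers across almost all $t$, following the strategy of Brothers--Ziemer as adapted to the anisotropic setting and mirroring the Euclidean argument of Nguyen \cite{NVH16}.
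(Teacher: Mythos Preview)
The paper does not prove this lemma: it is stated as Theorem~1.1 of \cite{FV04} and used as a black box in the equality analysis of Section~\ref{sec:equality}. There is therefore no in-paper proof to compare your proposal against.

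Your sketch is the standard layer-cake argument for the anisotropic P\'olya--Szeg\"o principle, and the inequality portion is sound: coarea plus H\"older plus Minkowski's first inequality \eqref{eq:Firey_Min_first} gives a lower bound depending only on $\mu_f$, and direct computation shows this bound is saturated by $f^K$. This is indeed essentially the route taken in \cite{FV04} (and, for $K=B_2^n$, in \cite{BZ88}). Two small remarks. First, the approximation step needs more than $L^p$-contractivity of rearrangement: you need that $\int h_K(\nabla f^K)^p$ is lower semicontinuous along the approximating sequence, which in the paper's proof of Theorem~\ref{t:mps} is handled via weak $W^{1,p}$-convergence of $f_k^\star$; the same works here since $h_K(\cdot)^p$ is convex. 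Second, for the equality case you correctly identify the hard step---showing the centers $x(t)$ are constant---but your description (``propagate the alignment of the centers across almost all $t$'') understates the difficulty. The argument in \cite{FV04} (following \cite{BZ88}) requires a careful analysis using the regularity hypothesis \eqref{eq:regularity_2} to rule out the gradient vanishing on sets of positive measure, and then a measure-theoretic argument to pin down $x(t)$; it does not follow from nesting of level sets and H\"older equality alone.
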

\begin{proposition}
\label{p:equal_form}
    Let $p > 1$, $m\in\N,$ $f\in W^{1,p}(\Rn)$ not identically zero, and $Q\in\conbodo[m]$. Fix any $K\in\conbodio[n]$ such that $\vol(K)=\omega_n.$ Then, one has
    $$\mathcal{E}_p(Q,f^\star) = \left(\int_{\R^n}h_{K}(\nabla^K f(x))^p dx\right)^{\frac{1}{p}}.$$
\end{proposition}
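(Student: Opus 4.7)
The plan is to reduce everything to a one-dimensional computation in polar coordinates, after using Theorem~\ref{t:relate} to strip the $(Q,m)$-dependence off the left side. The first move is to invoke Theorem~\ref{t:relate} for $f^\star$ (which is radial) to obtain $\mathcal{E}_p(Q,f^\star) = \|\nabla f^\star\|_{L^p(\R^n)}$, so that the claim reduces to the pointwise integral identity
$$\|\nabla f^\star\|_{L^p(\R^n)}^p = \int_{\R^n} h_K(\nabla f^K(x))^p\, dx$$
for every $K\in\conbodio[n]$ with $\vol(K)=\omega_n$; here I interpret $\nabla^K f$ as $\nabla(f^K)$, matching the notation of Lemma~\ref{l:PS_convex}.

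Next, I would write $\phi(s) := f^*(\omega_n s^n)$, so that $f^K(x) = \phi(\|x\|_K)$ and $f^\star(x)=\phi(|x|)$, and apply the chain rule to get $\nabla f^K(x) = \phi'(\|x\|_K)\nabla\|x\|_K$. The Euler relation $x\cdot \nabla\|x\|_K = \|x\|_K$ identifies $\nabla\|x\|_K$ with the correctly scaled outer normal of $\partial K$ at $x/\|x\|_K$ and yields $h_K(\nabla\|x\|_K)=1$. Combined with $\phi' \leq 0$ and the positive homogeneity of $h_K$, this gives $h_K(\nabla f^K(x)) = |\phi'(\|x\|_K)|\, h_K(-\nabla\|x\|_K)$. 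Decomposing the integral in spherical polar coordinates $x=s\theta$, $\theta\in\s$, and substituting $t = s\|\theta\|_K$ in the radial variable separates the variables and reduces the desired identity to
$$\int_{\s}\frac{h_K(-\nabla\|\theta\|_K)^p}{\|\theta\|_K^n}\, d\theta = n\omega_n.$$
For origin-symmetric $K$ the numerator is identically $1$, and the identity becomes $\int_{\s} \|\theta\|_K^{-n}\, d\theta = n\vol(K)$, which is \eqref{eq:polar_coordinates}; an analogous polar computation with $K=\B$ shows $\|\nabla f^\star\|_{L^p(\R^n)}^p = n\omega_n\int_0^\infty |\phi'(t)|^p t^{n-1}\, dt$, closing the argument.

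The hard part is the step $h_K(-\nabla\|\theta\|_K) = h_K(\nabla\|\theta\|_K) = 1$, which genuinely requires $K$ to be origin-symmetric. Since the proposition will be applied with $K$ an ellipsoid (via \eqref{eq:elipp_cal}) in the equality analysis, origin-symmetry is consistent with the intended use; if the statement is meant to cover all $K\in\conbodio[n]$, then either a symmetry hypothesis is implicit or an asymmetric cancellation must compensate for the factor $h_K(-\nabla\|\theta\|_K)/h_K(\nabla\|\theta\|_K)$ that I do not see from the pointwise expression.
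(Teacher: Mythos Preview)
Your approach is correct and matches the paper's own proof almost exactly. The paper also begins by invoking Theorem~\ref{t:relate} to obtain $\mathcal{E}_p(Q,f^\star)=\|\nabla f^\star\|_{L^p(\Rn)}$, and then simply cites \cite[Lemma~3.5]{NVH16} for the remaining identity $\|\nabla f^\star\|_{L^p(\Rn)}^p=\int_{\Rn}h_K(\nabla f^K)^p\,dx$; you instead supply the polar-coordinate computation directly, which is a perfectly good substitute for that citation.

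Your caveat about origin-symmetry is well taken and in fact sharp: the identity $\int_{\s}h_K(-\nabla\|\theta\|_K)^p\|\theta\|_K^{-n}\,d\theta=n\omega_n$ genuinely fails for asymmetric $K$ when $p>1$ (already in dimension one with $K=[-a,b]$, $a\neq b$, one checks $b(a/b)^p+a(b/a)^p\neq a+b$). The paper's subsequent use of the proposition is only with $K$ an origin-symmetric ellipsoid, and Lemma~\ref{l:PS_convex} is likewise stated for symmetric $K$, so your restriction to symmetric $K$ covers exactly what is needed.
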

\begin{proof}
    We use from Theorem~\ref{t:relate} that $\mathcal{E}_p(Q,f^\star)=\|\nabla f^*\|_{L^p(\Rn)}=\mathcal{E}_p([-1/2,1/2],f^\star)$, and then the claim follows from \cite[Lemma 3.5]{NVH16}.
\end{proof}
The equality characterization for Theorem~\ref{t:mps} now follows: we set $$L=\left(\frac{\omega_n}{\vol(\G \LYZP f)}\right)^\frac{1}{n}\G \LYZP f.$$ Then, $\vol(L)=\omega_n$ and from Proposition~\ref{prop:BP_energy} we have $ \mathcal{E}_p(Q,f) \geq \left(\int_{\R^n}h_{L}(\nabla f(x))^p dx\right)^{\frac{1}{p}},$
    with equality only when $L$ is an origin-symmetric ellipsoid $E$. Finally, when $L=E$, use Lemma~\ref{l:PS_convex} and Proposition \ref{p:equal_form} to finish the equality characterizations.

\section{Applications of the affine P\'olya-Szeg\"o principle}
\label{sec:app}

The central theme of this section is to demonstrate a zoo of affine ``upgrades'' of well-known functional isoperimetric inequalities using the affine P\'olya-Szeg\"o principle (Theorem~\ref{t:mps}). The key ingredient is that in the case of radially symmetric functions, the affine energy $\mathcal{E}_p(Q, f)$ reduces to its Euclidean counterpart $\|\nabla f\|_{L^p(\Rn)}$. This philosophy was demonstrated in \eqref{eq 8.19.1} when it was shown that Theorem~\ref{t:mps}, Theorem~\ref{t:relate}, and the classical $L^p$ Sobolev inequality \eqref{eq:lp_sobolev}, imply the $m$th-order $L^p$ affine Sobolev inequality \eqref{eq:m_lp_affine_sobolev}.

The same philosophy applies to a wide range of well-known functional inequalities. We shall skip the proofs of most of the theorems to-be-presented, as they are more or less just along the lines of \eqref{eq 8.19.1}. Occasionally, a proof will be given if additional details are needed. We remark that these affine inequalities will imply their classical (nonaffine) counterparts, thanks to Theorem~\ref{t:relate}.

Note that the $L^p$ Sobolev inequality \eqref{eq:lp_sobolev} and the $m$th-order affine Sobolev inequality \eqref{eq:m_lp_affine_sobolev} are only valid for $p\in [1,n)$. For $p>n$, the Morrey-Sobolev embedding theorem states that any compactly supported function in $W^{1,p}(\Rn)$ is essentially bounded. This can be written as a sharp inequality, see, \emph{e.g.}, Talenti \cite[Theorem 2.E]{GT94}:
 \begin{equation}\|f\|_{L^\infty}(\Rn) \leq b_{n,p}\vol(\text{supp}(f))^{\frac{1}{p}\left(\frac{p}{n}-1\right)}\|\nabla f\|_{L^p(\Rn)},
\label{eq:MS}
\end{equation}
for every $f\in W^{1,p}(\Rn)$ with support $\text{supp}(f)$ having finite volume. Here 
\begin{equation}
\label{eq:constant_2}
b_{n,p}=n^{-\frac{1}{p}}\omega_n^{-\frac{1}{n}}\left(\frac{p-1}{p-n}\right)^\frac{p-1}{p}.\end{equation}
There is equality for functions of the form 
\begin{equation}
\label{eq:f_ellip}
f_{MS}(x)=a(1-|x-x_0|^\frac{p-n}{p-1})_+
\end{equation}
for some $a\in \R$ and $x_0\in\Rn$.

Chaining Theorems \ref{t:relate}, \ref{t:mps}, and \eqref{eq:MS} as in \eqref{eq 8.19.1} provides the following affine version of \eqref{eq:MS}. 
\begin{theorem}[The $m$th-order $L^p$ affine Morrey-Sobolev inequality]
\label{t:MS}
    Fix $p>n \geq 1$ and $f\in W^{1,p}(\Rn)$ such that $\text{supp}(f)$ has finite volume. Then, for every $m\in\N$ and $Q\in\conbodo[m]$
    \begin{equation}\|f\|_{L^\infty(\Rn)} \leq b_{n,p}\vol(\text{supp}(f))^{\frac{1}{p}\left(\frac{p}{n}-1\right)}\mathcal{E}_p(Q,f).
\label{eq:our_affine_MS}
\end{equation}
Here, the sharp constant $b_{n,p}$ is given by \eqref{eq:constant_2}. There is equality for functions of the form $f_{MS}\circ A$, where $f_{MS}$ is given by \eqref{eq:f_ellip} and $A\in \GL(n)$.
\end{theorem}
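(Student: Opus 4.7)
The plan is to mimic the three-step chain \eqref{eq 8.19.1} that produced the $m$th-order affine Sobolev inequality \eqref{eq:m_lp_affine_sobolev} from its classical counterpart, replacing \eqref{eq:lp_sobolev} by the Morrey-Sobolev inequality \eqref{eq:MS}. The two crucial facts about spherical rearrangement that enable the upgrade are that, by \eqref{eq:dis_funs} and \eqref{eq:change_of_dimension}, one has $\|f^\star\|_{L^\infty(\Rn)} = \|f\|_{L^\infty(\Rn)}$ and $\vol(\text{supp}(f^\star)) = \vol(\text{supp}(f))$; both quantities in \eqref{eq:MS} are therefore preserved under the rearrangement.

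Concretely, I would apply, in order, Theorem~\ref{t:mps}, Theorem~\ref{t:relate} (using that $f^\star$ is radially symmetric), and finally \eqref{eq:MS} applied to $f^\star$, to obtain
\begin{equation*}
\mathcal{E}_p(Q,f)\,\geq\, \mathcal{E}_p(Q,f^\star)\,=\, \|\nabla f^\star\|_{L^p(\Rn)}\,\geq\, b_{n,p}^{-1}\vol(\text{supp}(f^\star))^{-\frac{1}{p}\left(\frac{p}{n}-1\right)}\|f^\star\|_{L^\infty(\Rn)}.
\end{equation*}
Rearranging and substituting the two rearrangement identities above gives exactly \eqref{eq:our_affine_MS}. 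The hypothesis $\vol(\text{supp}(f))<\infty$ carries over to $f^\star$ by the same identities, so \eqref{eq:MS} applies legitimately.

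For the equality case, note that $f_{MS}$ is radial, so Theorem~\ref{t:relate} gives $\mathcal{E}_p(Q,f_{MS}) = \|\nabla f_{MS}\|_{L^p(\Rn)}$, while $f_{MS}$ saturates \eqref{eq:MS} by the choice \eqref{eq:f_ellip}; this settles the case $A = I$. To extend to $f_{MS}\circ A$ for arbitrary $A\in\GL(n)$, one checks that both sides of \eqref{eq:our_affine_MS} transform by the same factor under $f\mapsto f\circ A$. Decomposing $A = \lambda\phi$ with $\lambda = |\det A|^{1/n}>0$ and $\phi$ of unit determinant, the $\SL(n)$-piece leaves $\mathcal{E}_p(Q,\cdot)$ and $\vol(\text{supp}(\cdot))$ invariant, via the contravariance $\P\phi K = \overline{\phi^{-t}}\P K$ combined with \eqref{eq 8.15.10} and a change of variables in the definition of $h_{\LYZ f}$, while the scalar factor $\lambda$ contributes the same power of $\lambda$ to both sides by a routine homogeneity check. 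The only real task in the proof is this $\GL(n)$-covariance bookkeeping; there is no new geometric content beyond what has already been established.
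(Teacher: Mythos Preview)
Your proposal is correct and follows exactly the approach the paper indicates: the paper simply says to chain Theorems~\ref{t:relate}, \ref{t:mps}, and \eqref{eq:MS} ``as in \eqref{eq 8.19.1}'', which is precisely your three-step argument, together with the rearrangement-invariance of $\|f\|_{L^\infty}$ and $\vol(\text{supp}(f))$. Your treatment of the equality case via radiality of $f_{MS}$ plus $\GL(n)$-covariance is more explicit than what the paper writes but is the intended verification.
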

This recovers the $m=1$ cases given by \cite[Theorem 1.2]{CLYZ09} when $Q=[-\frac 12,\frac 12]$   and \cite[Corollary 9]{HSX12} when  $Q=[0,1]$ .

It was shown in \cite[Proposition 4.6]{HLPRY23_2} that $\lim_{p\rightarrow \infty} d_{n,p}(Q)$ exists. We shall denote the limit as $d_{n,\infty}(Q)$. This motivates the following definition.
\begin{definition}
\label{def:affine_linfty_m_energy}
    Let $f\in W^{1,\infty}(\Rn), m\in\N,$ and let $Q\in \conbodo[m]$. Then, the $(L^\infty,Q)$ affine Sobolev energy of $f$ is given by 
    \begin{equation*}
\mathcal{E}_{\infty}(Q,f)=d_{n,\infty}(Q)\left(\int_{\S} \|h_Q((\nabla f(\cdot))^t\theta)\|_{L^\infty(\Rn)}^{-nm} d\theta\right)^{-\frac 1 {nm}}.
\end{equation*}
\end{definition}
Taking the limit in \eqref{eq:our_affine_MS} shows the following Faber-Krahn-type inequality for $\mathcal{E}_{\infty}(Q,f)$. This recovers the $m=1, Q=[0,1]$ case by Haberl, Schuster and Xiao \cite[Corollary 10]{HSX12}.

\begin{corollary}[Faber-Krahn Inequality for $\mathcal{E}_{\infty}(Q,f)$]\label{corollary 8.19.1}
    Let $m\in\N$, $Q\in \conbodo[m]$ and $f\in W^{1,\infty}(\Rn)$ such that $\text{supp}(f)$ has finite volume. Then,    \begin{equation}
    \|f\|_{L^\infty}(\Rn) \leq \omega_n^{-1/n}\vol(\text{supp}(f))^{\frac{1}{n}}\mathcal{E}_\infty(Q,f).
\label{eq:our_FK}
\end{equation}
Equality holds if  $f$ is of the form
\begin{equation}\label{eq 8.19.2}
	f(x)=a(1-|A(x-x_0)|)_+
\end{equation}
for some $a\in \R,x_0\in\Rn$ and $A\in \GL(n)$. 
\end{corollary}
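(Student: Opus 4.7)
The plan is to derive \eqref{eq:our_FK} by passing to the limit $p\to\infty$ in the $m$th-order $L^p$ affine Morrey-Sobolev inequality \eqref{eq:our_affine_MS}. Using \eqref{eq:constant_2}, one checks that
$$b_{n,p}=n^{-1/p}\omega_n^{-1/n}\left(\tfrac{p-1}{p-n}\right)^{(p-1)/p}\longrightarrow \omega_n^{-1/n},$$
while the volume exponent $\tfrac{1}{p}(\tfrac{p}{n}-1)=\tfrac{1}{n}-\tfrac{1}{p}\to \tfrac{1}{n}$. Thus \eqref{eq:our_FK} will follow once one shows $\mathcal{E}_p(Q,f)\to \mathcal{E}_\infty(Q,f)$ as $p\to\infty$, which is the substantive step.

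By the convergence $d_{n,p}(Q)\to d_{n,\infty}(Q)$ cited from \cite[Proposition 4.6]{HLPRY23_2}, this reduces to passing the limit through the integral $\int_{\S}\|h_Q((\nabla f(\cdot))^t\theta)\|_{L^p(\Rn)}^{-nm}\,d\theta$. For each $\theta\in\S$, the function $z\mapsto h_Q((\nabla f(z))^t\theta)$ is essentially bounded (since $\nabla f\in L^\infty$ and $h_Q$ is Lipschitz) and supported in $\mathrm{supp}(f)$, a set of finite volume; the classical convergence of $L^p$-norms to $L^\infty$-norms on a finite-measure set then gives pointwise convergence of the integrand on $\S$.

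The main technical obstacle is justifying the interchange of this limit with the integration on $\S$ via dominated convergence, which requires a uniform positive lower bound on $\|h_Q((\nabla f(\cdot))^t\theta)\|_{L^p(\Rn)}$ for $\theta\in\S$ and $p$ large. I would obtain this by adapting the linear-independence argument of Lemma~\ref{lemma 8.15.1} at the level of essential suprema: that argument shows $\theta\mapsto \|h_Q((\nabla f(\cdot))^t\theta)\|_{L^\infty(\Rn)}$ is continuous and strictly positive on the compact sphere $\S$, hence bounded below by some $c>0$. By a compactness-and-continuity argument, the pointwise $L^p\to L^\infty$ convergence can be upgraded to uniform convergence in $\theta\in\S$, which delivers the required uniform lower bound for large $p$. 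Dominated convergence then yields $\mathcal{E}_p(Q,f)\to\mathcal{E}_\infty(Q,f)$.

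For the equality statement, the extremal function \eqref{eq:f_ellip} of Theorem~\ref{t:MS} degenerates in the limit $p\to\infty$ to precisely the form \eqref{eq 8.19.2}. To verify equality directly, one uses the $\GL(n)$-affine invariance of both sides of \eqref{eq:our_FK} to reduce to $f(x)=a(1-|x|)_+$; for this radially symmetric $f$, the $p=\infty$ analog of Theorem~\ref{t:relate} (which follows from the same orthogonal change-of-variables computation used in its proof, with $L^\infty$ replacing the $L^p$-norm and the equality case of Jensen replaced by the fact that the integrand there is independent of $T\in O(n)$) yields $\mathcal{E}_\infty(Q,f)=\|\nabla f\|_{L^\infty(\Rn)}=a$, while $\vol(\mathrm{supp}(f))=\omega_n$, so both sides of \eqref{eq:our_FK} equal $a$.
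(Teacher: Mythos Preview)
Your overall strategy matches the paper's: both deduce \eqref{eq:our_FK} by sending $p\to\infty$ in Theorem~\ref{t:MS}, using the convergence $b_{n,p}\to\omega_n^{-1/n}$, $\tfrac{1}{p}(\tfrac{p}{n}-1)\to\tfrac{1}{n}$, and $d_{n,p}(Q)\to d_{n,\infty}(Q)$. The difference is in how the limit is pushed through the spherical integral. The paper does not attempt to prove the full convergence $\mathcal{E}_p(Q,f)\to\mathcal{E}_\infty(Q,f)$; it only needs $\limsup_{p\to\infty}\mathcal{E}_p(Q,f)\le\mathcal{E}_\infty(Q,f)$, and for this Fatou's lemma suffices:
\[
\liminf_{p\to\infty}\int_{\S}\|h_Q((\nabla f)^t\theta)\|_{L^p}^{-nm}\,d\theta
\;\ge\;
\int_{\S}\liminf_{p\to\infty}\|h_Q((\nabla f)^t\theta)\|_{L^p}^{-nm}\,d\theta
=\int_{\S}\|h_Q((\nabla f)^t\theta)\|_{L^\infty}^{-nm}\,d\theta,
\]
after which one inverts the negative power. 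This bypasses any need for a dominating function or a uniform lower bound.

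Your route via dominated convergence is workable and yields the stronger statement $\mathcal{E}_p\to\mathcal{E}_\infty$, but the step ``by a compactness-and-continuity argument, the pointwise $L^p\to L^\infty$ convergence can be upgraded to uniform convergence in $\theta$'' is not justified as written: pointwise convergence of continuous functions on a compact set does not by itself imply uniform convergence. You would need to invoke either equicontinuity (which does hold here, since the Lipschitz estimate \eqref{eq 8.15.1}--\eqref{eq 8.15.2} gives a modulus of continuity in $\theta$ proportional to $\|\nabla f\|_{L^p}\le |\mathrm{supp}(f)|^{1/p}\|\nabla f\|_{L^\infty}$, uniformly bounded in $p$) or, more directly, obtain the uniform lower bound from H\"older: for fixed $p_0$ and all $p\ge p_0$, $\|g\|_{L^p}\ge |\mathrm{supp}(f)|^{1/p-1/p_0}\|g\|_{L^{p_0}}$, combined with the $c_0$ from Lemma~\ref{lemma 8.15.1} at $p=p_0$. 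Either patch completes your argument; the paper's Fatou approach simply avoids the issue altogether. Your treatment of the equality case is more explicit than the paper's (which just cites ``a direct computation'') and is correct.
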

\begin{proof}
   Note that the fact that equality holds in \eqref{eq:our_FK} for $f$ in \eqref{eq 8.19.2} follows from a direct computation. The inequality follows from Theorem~\ref{t:MS} by taking a limit. Indeed, from Fatou's lemma (and the fact that $d_{n,p}(Q)$ is continuous in $p$),
    \begin{align*}
\|f\|_{L^\infty(\Rn)} & \leq \omega_n^{-1 / n} \vol(\text{supp}(f))^{1 / n} \limsup _{q \rightarrow \infty} \mathcal{E}_q(Q,f) \\
& \leq \omega_n^{-1 / n} \vol(\text{supp}(f))^{1 / n}d_{n,\infty}(Q)\left(\liminf _{q \rightarrow \infty} \int_{\S}\left\|h_Q((\nabla f(\cdot))^t\theta)\right\|_{L^q(\Rn)}^{-nm} d \theta\right)^{-\frac{1}{nm}} \\
& \leq \omega_n^{-1 / n} \vol(\text{supp}(f))^{1 / n}d_{n,\infty}(Q)\left(\int_{\S} \liminf _{q \rightarrow \infty}\left\|h_Q((\nabla f(\cdot))^t\theta)\right\|_{L^q(\Rn)}^{-nm} d \theta\right)^{-\frac{1}{nm}} \\
& \leq \omega_n^{-1 / n} \vol(\text{supp}(f))^{1 / n}d_{n,\infty}(Q)\left(\int_{\S}\left\|h_Q((\nabla f(\cdot))^t\theta)\right\|_{L^\infty(\Rn)}^{-nm} d \theta\right)^{-\frac{1}{nm}} .
\end{align*}
\end{proof}

Let $n>1$. Denote
\begin{equation}
    m_n = \sup_{\phi}\int_0^\infty e^{\phi(t)^\frac{n}{n-1}-t}dt,
    \label{eq:constant}
\end{equation}
where the supremum is taken over all non-decreasing locally absolutely continuous functions in $[0,\infty)$ such that $\phi(0)=0$ and $\int_0^\infty \phi^\prime(t)^ndt \leq 1$. The Moser-Trudinger inequality \cite{MJ70,TN67} states that for every function $f\in W^{1,n}(\Rn)$ with $0<\vol(\text{supp}(f))<\infty$, we have
\begin{equation}
    \label{eq:mt}
    \frac{1}{\vol(\text{supp}(f))} \int_{\Rn}e^{\left(n\omega_n^{1/n}\frac{|f(x)|}{\|\nabla f\|_{L^n(\Rn)}}\right)^\frac{n}{n-1}}dx \leq  m_n.\end{equation}
  The constant $n\omega_n^\frac{1}{n}$ is  best possible in the sense that if it were to be replaced by any other larger number, the above inequality would fail for some $f\in W^{1,n}(\Rn)$ with $0<\vol(\text{supp}(f))<\infty$. Carleson and Chang \cite{CC86} showed that spherically symmetric extremals do exist for \eqref{eq:mt}. Theorems \ref{t:relate} and \ref{t:mps} now imply the following affine version; when $m=1$, we recover the $Q=[-\frac 12,\frac 12]$ case from \cite[Theorem 1.1]{CLYZ09} and the $Q=[0,1]$ case from \cite[Corollary 8]{HSX12}.
 
\begin{theorem}[The $m$th-order $L^p$ affine Moser-Trudinger inequality]
\label{t:MT}
    Let $n>1$, $m\in\N$, and $Q\in\conbodo[m]$. Then, for every $f\in W^{1,n}(\Rn)$ with $0<\vol(\text{supp}(f)) < \infty$, we have
    \begin{equation}
    \label{eq:our_mt}
    \frac{1}{\vol(\text{supp}(f))} \int_{\Rn}e^{\left(n\omega_n^{1/n}\frac{|f(x)|}{\mathcal{E}_n(Q,f)}\right)^\frac{n}{n-1}}dx \leq  m_n.\end{equation}
    The constant $n\omega_n^{1/n}$ is best possible in the sense that if it were to be replaced by any other larger number, the above inequality would fail for some $f\in W^{1,n}(\Rn)$ with $0<\vol(\text{supp}(f))<\infty$.
\end{theorem}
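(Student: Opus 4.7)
The plan is to reduce \eqref{eq:our_mt} to the classical Moser--Trudinger inequality \eqref{eq:mt} applied to the symmetrization $f^\star$, following the same template as the chain \eqref{eq 8.19.1}. The affine input is Theorem~\ref{t:mps} at $p=n$ together with the identity $\mathcal{E}_n(Q,f^\star)=\|\nabla f^\star\|_{L^n(\Rn)}$ from Theorem~\ref{t:relate}, giving
$$\mathcal{E}_n(Q,f)\ \geq\ \mathcal{E}_n(Q,f^\star)\ =\ \|\nabla f^\star\|_{L^n(\Rn)}.$$
Setting $c_f:=n\omega_n^{1/n}/\mathcal{E}_n(Q,f)$ and $c_\star:=n\omega_n^{1/n}/\|\nabla f^\star\|_{L^n(\Rn)}$, this is simply the comparison $c_f\leq c_\star$.

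The substantive step turns this scalar comparison into a comparison of the two exponential integrals. I apply \eqref{eq:change_of_dimension} with the auxiliary function $\Phi(t):=\exp\bigl((c_f\,t)^{n/(n-1)}\bigr)-1$, which is continuous, increasing on $[0,\infty)$, and vanishes at the origin, to obtain $\int_{\Rn}\Phi(|f|)\,dx=\int_{\Rn}\Phi(f^\star)\,dx$. Since both integrands vanish off supports of equal Lebesgue measure by \eqref{eq:dis_funs}, adding $\vol(\text{supp}(f))$ to each side converts this into an identity for the genuine exponentials; pointwise monotonicity in $c$ together with $c_f\leq c_\star$ then yields
$$\int_{\text{supp}(f)}e^{(c_f|f(x)|)^{n/(n-1)}}\,dx\ \leq\ \int_{\text{supp}(f^\star)}e^{(c_\star f^\star(x))^{n/(n-1)}}\,dx.$$
Dividing through by the common support volume and invoking \eqref{eq:mt} for $f^\star$ concludes the proof of the inequality.

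Sharpness is essentially automatic. If $f$ is radial then $\mathcal{E}_n(Q,f)=\|\nabla f\|_{L^n(\Rn)}$ by the equality case of Theorem~\ref{t:relate}, so \eqref{eq:our_mt} and \eqref{eq:mt} coincide on the radial class; any enlargement of the constant $n\omega_n^{1/n}$ in \eqref{eq:our_mt} would then violate the known sharpness of \eqref{eq:mt} on the radial near-extremals of Carleson--Chang \cite{CC86}. The only mildly delicate point in the entire argument is the interpretation of $\int_{\Rn}e^{\ldots}\,dx$, which is formally infinite on the nose; the natural reading is integration over $\text{supp}(f)$, equivalently integration of $e^{\ldots}-1$ over $\Rn$, and this "subtract one" device is exactly what makes the equimeasurability step \eqref{eq:change_of_dimension} applicable. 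Beyond that bit of bookkeeping, no further geometric ingredient is needed.
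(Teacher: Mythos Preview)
Your proposal is correct and follows exactly the approach the paper indicates: chain Theorem~\ref{t:mps} with the equality case of Theorem~\ref{t:relate} to get $\mathcal{E}_n(Q,f)\geq\|\nabla f^\star\|_{L^n(\Rn)}$, transfer the exponential integral to $f^\star$ by equimeasurability, and invoke the classical Moser--Trudinger inequality \eqref{eq:mt}. Your handling of the ``subtract one'' bookkeeping and of sharpness via radial functions is appropriate; the only cosmetic point is that you identify $\text{supp}(f)$ with $\{|f|>0\}$, which is the standard convention here and in any case the possible discrepancy $\vol(\text{supp}(f))\geq\vol(\{|f|>0\})=\vol(\text{supp}(f^\star))$ only helps, since $m_n\geq 1$.
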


Recall that the sharp logarithmic Sobolev inequality shown in \cite{FW78} states: for $f\in W^{1,2}(\Rn)$ such that $\|f\|_{L^2(\R^n)}=1$, one has
$$\int_{\Rn}|f|^2\log |f|dx \leq \frac{n}{2}\log\left(\left(\frac{2}{ne\pi}\right)^\frac{1}{2}\|\nabla f\|_{L^2(\R^n)}\right).$$
Ledoux \cite{ML96} and Del Pino and Dolbeault \cite{DD03} established the following extension of the log-Sobolev inequality: for $1\leq p <n$ and $f\in W^{1,p}(\Rn)$  such that $\|f\|_{L^p(\Rn)}=1,$ one has
\begin{equation}
    \int_{\Rn}|f|^p\log |f| dx \leq \frac{n}{p}\log(c_{n,p}\|\nabla f\|_{L^p(\Rn)}),
    \label{eq:lp_log}
\end{equation}
where \begin{equation}
\label{eq:sharp_constant_3}
c_{n,p} =\left(\frac{p}{n}\right)^{1 / p}\left(\frac{p-1}{e}\right)^{1-1 / p}\left(\frac{\Gamma\left(1+\frac{n}{2}\right)}{\pi^{n / 2} \Gamma\left(1+\frac{n(p-1)}{p}\right)}\right)^{1 / n}, \quad c_{n,1}=\lim_{p\to 1^+} c_{n,p}.
\end{equation} As for equality conditions, when $p=1$, Beckner \cite{BW99} showed that one must look beyond $W^{1,1}(\Rn)$ to $BV(\Rn),$ where there is equality only for dilates of characteristic functions of centered Euclidean balls. Carlen \cite{CE91}, for $p=2$, and Del Pino and Dolbeault \cite{DD03} for $1<p<n$, showed there is equality if and only if there exists $a>0$ and $x\in \Rn$ such that
\begin{equation}
\label{eq:log_sobo_eq}
f_{LS}(x)=\frac{\pi^{n / 2} \Gamma\left(1+\frac{n}{2}\right)}{a^{n(p-1) / p} \Gamma\left(1+\frac{n(p-1)}{p}\right)} \exp \left(-\frac{1}{a}\left|x-x_0\right|^{p /(p-1)}\right).\end{equation}

The following corollary includes, as special case \cite[Corollary 2]{HSX12} ($m=1, Q=[0,1]$). It follows from Theorems \ref{t:relate}, \ref{t:mps}, and \eqref{eq:lp_log}.

\begin{theorem}[The $m$th-order $L^p$ affine log-Sobolev inequality]
    Let $1\leq p <n$ and $f\in W^{1,p}(\Rn)$. Then, for every $m\in\N$ and $Q\in\conbodo[m]$, one has
    $$\int_{\Rn}|f|^p\log |f| dx \leq \frac{n}{p}\log(c_{n,p}\mathcal{E}_p(Q,f)).$$
    Here, the sharp constant $c_{n,p}$ is given by \eqref{eq:lp_log}. If $p=1$, there is equality for dilates of characteristic functions of centered ellipsoids. On the other-hand, If $1<p<n$, then there is equality for functions of the form $f_{LS}\circ A$, were $f_{LS}$ is of the form \eqref{eq:log_sobo_eq} and $A\in \GL(n)$.
\end{theorem}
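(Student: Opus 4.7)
The plan is to mimic the chain of inequalities in \eqref{eq 8.19.1} used in the introduction to derive the $m$th-order affine Sobolev inequality from its classical counterpart, this time feeding in the sharp $L^p$ log-Sobolev inequality \eqref{eq:lp_log} as the classical input. As in \eqref{eq:lp_log}, we may assume (after a trivial rescaling argument) that $\|f\|_{L^p(\Rn)} = 1$; by \eqref{eq:dis_funs}, $f^\star$ then also has unit $L^p$ norm.

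The proof would proceed in four steps. First, since $|f|$ and $f^\star$ are equimeasurable, \eqref{eq:change_of_dimension} (applied separately to the positive and negative parts of $t \mapsto t^p \log t$) yields
\begin{equation}
\int_{\Rn} |f|^p \log |f| \, dx \;=\; \int_{\Rn} (f^\star)^p \log f^\star \, dx.
\end{equation}
Second, apply the classical sharp $L^p$ log-Sobolev inequality \eqref{eq:lp_log} to the radially symmetric $f^\star \in W^{1,p}(\Rn)$ to bound the right-hand side by $\tfrac{n}{p}\log(c_{n,p}\|\nabla f^\star\|_{L^p(\Rn)})$. Third, invoke the equality case of Theorem~\ref{t:relate} (which holds because $f^\star$ is radially symmetric) to replace $\|\nabla f^\star\|_{L^p(\Rn)}$ by $\mathcal{E}_p(Q,f^\star)$. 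Fourth, apply the $m$th-order affine P\'olya-Szeg\"o principle (Theorem~\ref{t:mps}) to obtain $\mathcal{E}_p(Q,f^\star) \leq \mathcal{E}_p(Q,f)$, and use monotonicity of $\log$ to conclude.

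For the equality characterization when $p=1$, Beckner's result \cite{BW99} forces $f^\star$ to be a dilate of the characteristic function of a centered Euclidean ball. Combined with the equality condition in Theorem~\ref{t:mps} (which permits level sets of $f$ to be dilations of a single ellipsoid about its center), this yields dilates of characteristic functions of centered ellipsoids as the extremals. For $1<p<n$, the Carlen/Del Pino-Dolbeault characterization forces $f^\star$ to equal $f_{LS}$ centered at the origin; then the equality case of Theorem~\ref{t:mps}, valid under the minor regularity hypothesis \eqref{eq:regularity}, forces $f$ to have ellipsoidal level sets, giving $f = f_{LS}\circ A$ for some $A \in \GL(n)$. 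One should verify that $f_{LS}\circ A$ satisfies \eqref{eq:regularity}, which is immediate since $|\nabla f_{LS}|$ vanishes only at the single point $x_0$.

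Since every ingredient is already in place, there is no real obstacle to the argument; the only mildly subtle point is ensuring that the equality characterization of Theorem~\ref{t:mps}, stated for $p>1$ under \eqref{eq:regularity}, combines cleanly with the equality characterization of the classical log-Sobolev inequality. The $p=1$ case additionally requires one to note that the classical case extends to $BV(\Rn)$; this is consistent with the $BV$ extension of Theorem~\ref{t:relate} recorded there, so no new technology is needed.
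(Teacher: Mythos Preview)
Your proposal is correct and follows exactly the approach the paper indicates: the paper states just before the theorem that it ``follows from Theorems~\ref{t:relate}, \ref{t:mps}, and \eqref{eq:lp_log},'' which is precisely the chain you describe. Note that the theorem only asserts that the listed functions \emph{are} extremizers (not an if-and-only-if), so your verification that equality holds for $f_{LS}\circ A$ suffices and the extra discussion of necessity, while not wrong, goes beyond what is claimed.
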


Nash's inequality, shown in its optimal form by Carlen and Loss \cite{CL93}, states: for $f\in L^1(\Rn)\cap W^{1,2}(\Rn),$ one has
\begin{equation}
\|f\|_{L^2(\Rn)}\left(\frac{\|f\|_{L^2(\Rn)}}{\|f\|_{L^1(\Rn)}}\right)^\frac{2}{n}\leq \beta_n\|\nabla f\|_{L^2(\Rn)},
    \label{eq:nash}
\end{equation}
where
\begin{equation}
    \beta_n^2=\frac{2\left(1+\frac{n}{2}\right)^{1+n / 2}}{n \lambda_n \omega_n^{2 / n}},
    \label{eq:nash_constant}
\end{equation}
and $\lambda_n$ is the first nonzero Neumann eigenvalue of the Laplacian $-\Delta$ on radial functions on $B_2^n$. In fact, let $u$ be the associated eigenfunction. Then, there is equality in \eqref{eq:nash_constant} if and only if $f$ is a normalized and scaled version of 
\begin{equation}
    f_N(x)=\begin{cases}
        u(|x-x_0|) - u(1), &\text{if }|x| \leq 1,
        \\
        0, & \text{if }|x| \geq 1,
    \end{cases}
    \label{eq:function_nash}
\end{equation}
for some $x_0\in \Rn$. 

Applying Theorems \ref{t:relate}, \ref{t:mps}, and \eqref{eq:nash}, we immediately obtain the following theorem, which extends the case $m=1,Q=[0,1]$ by Haberl, Schuster and Xiao \cite[Corollary 11]{HSX12}.
\begin{theorem}[The $m$th-order affine $L^p$ Nash inequality]
    If $f\in L^1(\Rn)\cap W^{1,2}(\Rn),$ then, for every $m\in\N$ and $Q\in\conbodo[m]$, one has
    $$\|f\|_{L^2(\Rn)}\left(\frac{\|f\|_{L^2(\Rn)}}{\|f\|_{L^1(\Rn)}}\right)^\frac{2}{n}\leq \beta_n\mathcal{E}_p(Q,f),$$
    where $\beta_n$ is given by \eqref{eq:nash_constant}. 
    There is equality for functions of the form $f_N\circ A$, where $f_N$ is of the form \eqref{eq:function_nash} and $A\in \GL(n)$.
\end{theorem}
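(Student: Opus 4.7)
The plan is to mimic the argument template displayed in \eqref{eq 8.19.1}: chain together the higher-order affine P\'olya-Szeg\"o principle (Theorem~\ref{t:mps}), the radial identity of Theorem~\ref{t:relate}, and the classical sharp Nash inequality \eqref{eq:nash}. Concretely, for $f\in L^1(\Rn)\cap W^{1,2}(\Rn)$, I would first apply Theorem~\ref{t:mps} with $p=2$ to obtain $\mathcal{E}_2(Q,f)\geq \mathcal{E}_2(Q,f^\star)$. Then, because $f^\star$ is radially symmetric, Theorem~\ref{t:relate} collapses the affine energy to its Euclidean counterpart: $\mathcal{E}_2(Q,f^\star)=\|\nabla f^\star\|_{L^2(\Rn)}$.

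Next, rearrangement preserves the distribution function by \eqref{eq:dis_funs}, so \eqref{eq:change_of_dimension} yields $\|f\|_{L^q(\Rn)}=\|f^\star\|_{L^q(\Rn)}$ for $q=1,2$. The function $f^\star$ inherits membership in $L^1(\Rn)\cap W^{1,2}(\Rn)$ from $f$ (the $W^{1,2}$ membership being the content of the classical P\'olya-Szeg\"o principle, i.e.\ the $m=1,\ Q=[-\tfrac12,\tfrac12]$ specialization of Theorem~\ref{t:mps} combined with Theorem~\ref{t:relate}). Hence the sharp classical Nash inequality \eqref{eq:nash} applies to $f^\star$, and the desired affine Nash inequality follows from the chain
\begin{equation*}
\|f\|_{L^2(\Rn)}\!\left(\frac{\|f\|_{L^2(\Rn)}}{\|f\|_{L^1(\Rn)}}\right)^{\!\frac{2}{n}}
=\|f^\star\|_{L^2(\Rn)}\!\left(\frac{\|f^\star\|_{L^2(\Rn)}}{\|f^\star\|_{L^1(\Rn)}}\right)^{\!\frac{2}{n}}
\leq \beta_n\|\nabla f^\star\|_{L^2(\Rn)}=\beta_n\mathcal{E}_2(Q,f^\star)\leq \beta_n\mathcal{E}_2(Q,f).
\end{equation*}

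For the equality claim, I would check that both sides of the inequality are $\GL(n)$-covariant under $f\mapsto f\circ A$: each of the Lebesgue norms picks up a factor of $|\det A|^{-1/q}$ by a change of variables, while $\mathcal{E}_2(Q,\cdot)$ transforms by the same combined power, as can be verified from Definition~\ref{d:generalLYZbody_2} via the chain rule $\nabla(f\circ A)(x)=A^t(\nabla f)(Ax)$, a change of variable $y=Ax$ in the integral over $\Rn$, and the behaviour of the integral over $\S$ under $\theta\mapsto \overline{A}\theta$ (cf.\ \eqref{eq 8.15.10}). Since the radial Carlen-Loss extremizer $f_N$ from \eqref{eq:function_nash} already saturates the affine Nash inequality (being radial, it saturates the first two links in the chain by Theorems~\ref{t:mps} and~\ref{t:relate}, and it saturates the third by \cite{CL93}), propagating this equality under $\GL(n)$ yields equality at every $f_N\circ A$.

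I do not foresee any substantive obstacle: the argument is entirely parallel to those of Theorem~\ref{t:MS}, the affine Moser-Trudinger inequality (Theorem~\ref{t:MT}), and the affine log-Sobolev inequality treated earlier in this section. The only step requiring genuine care is the verification of $\GL(n)$-covariance used in the equality case, where one must track the Jacobian factors through Definition~\ref{d:generalLYZbody_2} to confirm they cancel between the two sides. This is essentially a bookkeeping exercise of the same flavour as the one already carried out implicitly for the Sobolev, Morrey-Sobolev, log-Sobolev, and Moser-Trudinger counterparts above.
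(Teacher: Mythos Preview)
Your proposal is correct and follows exactly the approach the paper indicates: it explicitly says ``Applying Theorems~\ref{t:relate}, \ref{t:mps}, and \eqref{eq:nash}, we immediately obtain the following theorem,'' which is precisely your chain $\mathcal{E}_2(Q,f)\geq \mathcal{E}_2(Q,f^\star)=\|\nabla f^\star\|_{L^2(\Rn)}\geq \beta_n^{-1}\|f^\star\|_{L^2}^{1+2/n}\|f^\star\|_{L^1}^{-2/n}$ combined with rearrangement invariance of the $L^q$ norms. Your treatment of the equality case via $\GL(n)$-covariance (reducing to $\SL(n)$-invariance of $\mathcal{E}_2(Q,\cdot)$ plus matching dilation behaviour) is also the intended verification, and the statement's $\mathcal{E}_p$ should indeed be read as $\mathcal{E}_2$.
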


We recall that the Gagliardo-Nirenberg inequalities are precisely
\begin{equation}
\|f\|_{L^r(\Rn)}\alpha_{n,p}(r,q)\leq \|\nabla f\|_{L^p(\Rn)}^\theta\|f\|_{L^q(\Rn)}^{1-\theta}
\label{eq:GN_ineq}
\end{equation}
where $1<p<n, q<r\leq \frac{np}{n-p}$, $\theta\in (0,1)$ is so that the inequality is scale-invariant. Note that inequality \eqref{eq:GN_ineq} can be deduced from the $L^p$ Sobolev inequality \eqref{eq:lp_sobolev}, but without the optimal constant $\alpha_{n,p}(r,q)$; this is still open in general. Clearly, \eqref{eq:lp_sobolev} and Nash's inequality \eqref{eq:nash} are special cases. In fact, the logarithmic Sobolev inequality \eqref{eq:lp_log} is also a limiting case. 

Del Pino and Dolbeault \cite{DD02,DD03} at the turn of the century made a breakthrough in establishing sharp constants for a range of parameters in \eqref{eq:GN_ineq}. This was followed by \cite{CENV04}, where mass transport was used to give an elegant proof of the same results, among other things. Suppose again that $1<p<n$ and now pick $q\in (p,\frac{pn}{n-p}-\frac{p}{n-p}]$. Then, set
\begin{equation}
\label{eq:params}
r=\frac{p(q-1)}{p-1} \quad \text{and} \quad \theta=\frac{n(q-p)}{(q-1)(np-(n-p)q)}.\end{equation}
Let $W_0^{1,p,q}(\Rn)$ denote the completion of the space of smooth compactly supported functions with respect to the norm given by
$$\|f\|_{p,q}=\|\nabla f\|_{L^p(\Rn)} + \|f\|_{L^q(\Rn)}.$$
Then, it was shown \cite{DD02,DD03,CENV04} that \eqref{eq:GN_ineq} holds for every $f\in W_0^{1,p,q}(\Rn)$ with constant
\begin{equation}
    \label{eq:sharp_constant_4}
    \alpha_{n,p}(r,q)=\left(\frac{p q}{\delta}\right)^{\frac 1 r}\left(\left(\frac{p \sqrt{\pi}}{q-p}\right)\left(\frac{n(q-p)}{p q}\right)^{\frac 1 p}\left(\frac{\Gamma\left(\frac{\delta(p-1)}{p(q-p)}\right) \Gamma\left(1+\frac{n(p-1)}{p}\right)}{\Gamma\left(\frac{q(p-1)}{q-p}\right) \Gamma\left(1+\frac{n}{2}\right)}\right)^{\frac1 n}\right)^\theta,
\end{equation}
where $\delta=np-q(n-p)$. Among the class $W_0^{1,p,q}(\Rn)$, equality holds if and only if $f$ is of the form
\begin{equation}
\label{GN_eq}
    f_{GN}(x)=a\left(1+b|x-x_0|^\frac{p}{p-1}\right)^{-\frac{p-1}{q-p}}
\end{equation}
for some $a\in \R, b>0$ and $x_0\in\Rn$. When restricted to the class $W_0^{1,p,q}(\Rn)$ (with the aforementioned choices of $q$ and $r$), \eqref{eq:GN_ineq} interpolates between the sharp $L^p$ Sobolev inequality \eqref{eq:lp_sobolev} and the logarithmic Sobolev inequality \eqref{eq:lp_log}. Indeed, the former is when $q=p(n-1)/(n-p)$ (which yields $t=1$ and $\alpha_{n,p}(r,q)=a_{n,p}$ from \eqref{eq:sobolev_cons}), and the latter follows by sending $q\to p$ from above.

Applying, like before, Theorems \ref{t:relate}, \ref{t:mps}, and  \eqref{eq:GN_ineq}, we immediately obtain the following, which extends on the case $m=1, Q=[0,1]$ by Haberl, Schuster and Xiao \cite[Corollary 12]{HSX12}. This formally interpolates between \eqref{eq:lp_sobolev} and \eqref{eq:lp_log}.
\begin{theorem}
    Let $1<p<n,p<q\leq \frac{p(n-1)}{(n-p)}, m\in\N$ and let $r$ and $\theta$ be given by \eqref{eq:params}. If $f\in W_0^{1,p,q}(\Rn)\cap W^{1,p}(\Rn)$, then, for every $Q\in\conbodo[m]$, one has
    \begin{equation}
\|f\|_{L^r(\Rn)}\alpha_{n,p}(r,q)\leq \mathcal{E}_p(Q,f)^{\theta}\|f\|_{L^q(\Rn)}^{1-\theta}.
\label{eq:our_GN_ineq}
\end{equation}
There is equality for functions of the form $f_{GN}\circ A$, there $A\in \GL(n)$ and $f_{GN}$ is of the form \eqref{GN_eq}.
\end{theorem}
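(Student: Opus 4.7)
The proof proceeds by the chaining philosophy of \eqref{eq 8.19.1} used throughout Section~\ref{sec:app}. First, I would apply the affine P\'olya-Szeg\"o principle (Theorem~\ref{t:mps}) to obtain $\mathcal{E}_p(Q,f) \geq \mathcal{E}_p(Q,f^\star)$, and then invoke Theorem~\ref{t:relate} to collapse the right-hand side to the Euclidean Dirichlet energy, giving $\mathcal{E}_p(Q,f^\star) = \|\nabla f^\star\|_{L^p(\Rn)}$ since $f^\star$ is radial. Next, I would apply the classical sharp Gagliardo-Nirenberg inequality \eqref{eq:GN_ineq} to $f^\star$, and finally use that the spherically symmetric rearrangement preserves $L^s$ norms (via \eqref{eq:change_of_dimension} with $\Phi(t)=t^s$), so that $\|f^\star\|_{L^r(\Rn)}=\|f\|_{L^r(\Rn)}$ and $\|f^\star\|_{L^q(\Rn)}=\|f\|_{L^q(\Rn)}$. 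Concatenating these four steps immediately yields \eqref{eq:our_GN_ineq}. A routine check confirms that $f\in W_0^{1,p,q}(\Rn)\cap W^{1,p}(\Rn)$ implies $f^\star$ lies in the same space.

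For the equality assertion, the extremals $f_{GN}$ of \eqref{GN_eq} are radially symmetric and strictly decreasing in $|x-x_0|$, so after the harmless translation absorbing $x_0$ one has $f_{GN}^\star = f_{GN}$. Every link of the chain is then an equality for $f_{GN}$: the affine P\'olya-Szeg\"o inequality is trivially saturated, Theorem~\ref{t:relate} gives equality on radial functions, and \eqref{eq:GN_ineq} attains its extremum at $f_{GN}$ by hypothesis. To extend this to $f_{GN}\circ A$ with $A\in \GL(n)$, I would observe that the level sets of $f_{GN}\circ A$ are concentric homothetic ellipsoids, so $(f_{GN}\circ A)^\star$ is again a function of the form \eqref{GN_eq} with only the dilation parameter $b$ rescaled (by a factor determined by $|\det A|$); equality therefore holds again at each step. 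The regularity condition \eqref{eq:regularity} required by Theorem~\ref{t:mps} is trivially satisfied by smooth, strictly decreasing radial functions.

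The main obstacle is essentially none, as all the heavy geometric lifting has already been carried out in Theorems~\ref{t:relate} and \ref{t:mps} and in the Del Pino-Dolbeault sharp result \eqref{eq:GN_ineq}. The only point that demands care is the bookkeeping of the equality case through the rearrangement, which becomes transparent once one notes that the classical extremal $f_{GN}$ is already radial and that the extremal class $\{f_{GN}\circ A : A\in \GL(n)\}$ is stable under spherically symmetric rearrangement.
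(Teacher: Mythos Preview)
Your proposal is correct and follows exactly the approach indicated by the paper, which simply says ``Applying, like before, Theorems \ref{t:relate}, \ref{t:mps}, and \eqref{eq:GN_ineq}, we immediately obtain the following.'' Your chain $\mathcal{E}_p(Q,f)\geq \mathcal{E}_p(Q,f^\star)=\|\nabla f^\star\|_{L^p(\Rn)}$ followed by the sharp Del Pino--Dolbeault inequality on $f^\star$ and the rearrangement-invariance of the $L^r$ and $L^q$ norms is precisely the intended argument, and your treatment of the equality cases via $(f_{GN}\circ A)^\star$ being again of the form \eqref{GN_eq} is correct (one could alternatively invoke the $\SL(n)$-invariance of $\mathcal{E}_p(Q,\cdot)$ plus scale invariance of the inequality, but your route is equally valid).
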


\section{The $m$th-order Poincar\'e inequality}
\label{sec:poin}

The methods used to establish the results in this section are inspired by those in \cite{HJM21}. For $\Omega\subset\Rn$, define the spaces $L^p(\Omega), W^{1,p}(\Omega),$ and $W_0^{1,p,q}(\Omega)$ analogously to the spaces $L^p(\Rn), W^{1,p}(\Rn)$ and $W_0^{1,p,q}(\Rn)$, respectively, but with integration over $\Omega$.  Set $W_0^{1,p}(\Omega)=W_0^{1,p,p}(\Omega).$

The main result of this section is the next inequality.  Note that the constant $c_0$ might not be best possible.

\begin{theorem}\label{t:BlSanPoin} Let $\Omega \subset \Rn$ be a bounded domain containing the origin in its interior, $m\in\N,$ $Q\in\conbodio[m]$ be origin-symmetric, and $p \geq 1$. Then, there exists constant $c_0>0$ dependent on $n, p, Q$, and $\Omega$, such that for any $C^1$ not identically zero function $f$ compactly supported in $\Omega$, \begin{equation}\label{e:BlSanPoin}
\mathcal{E}_{p}(Q,f) \geq c_0 \|f\|_{L^p(\Rn)}^{\frac{nm-1}{nm}}\|\nabla f\|_{L^p(\Rn)}^{\frac{1}{nm}}.
\end{equation}
\end{theorem}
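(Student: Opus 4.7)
The strategy is to combine the Blaschke-Santal\'o inequality on $\R^{nm}$ with a two-sided geometric analysis of the $(L^p,Q)$-LYZ body $\LYZ f$. Since $Q$ is origin-symmetric, $h_Q$ is even, and so from Definition~\ref{d:generalLYZbody_2} the body $\LYZ f$ is itself origin-symmetric. Blaschke-Santal\'o then gives
\[
\Vol(\LYZ f)\,\Vol(\LYZP f) \leq \omega_{nm}^{\,2}.
\]
Combining this with the identity $\mathcal{E}_{p}(Q,f) = d_{n,p}(Q)(nm)^{-1/nm}\Vol(\LYZP f)^{-1/nm}$ from \eqref{eq:relation_E} reduces the theorem to proving
\[
\Vol(\LYZ f) \geq c\,\|f\|_{L^{p}(\Rn)}^{\,nm-1}\,\|\nabla f\|_{L^{p}(\Rn)}
\]
for a constant $c>0$ depending on $n,m,p,Q,\Omega$.

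The plan is to bound $\Vol(\LYZ f)$ from below by the volume of a suitably large double cone inscribed in $\LYZ f$, and to produce this double cone by giving two separate lower estimates on $h_{\LYZ f}$. Fix $r>0$ such that $rB_{2}^{m}\subset Q$, which gives $h_{Q}(u)\geq r|u|$. First, for any $\theta=(\theta_{1},\dots,\theta_{m})\in\S$ there is some index $i_{0}$ with $|\theta_{i_{0}}|\geq 1/\sqrt{m}$; the one-dimensional Poincar\'e inequality applied to $f$ along lines parallel to $u:=\theta_{i_{0}}/|\theta_{i_{0}}|\in\s$ and integrated over $u^{\perp}$ (using that $\mathrm{supp}(f)\subset\Omega$ and $\mathrm{diam}(\Omega)<\infty$) yields
\[
\int_{\Rn}|\nabla f(y)^{t}u|^{p}\,dy \geq C(\Omega,p)\,\|f\|_{L^{p}(\Rn)}^{\,p}
\]
uniformly in $u\in\s$, which plugs into the definition of $h_{\LYZ f}$ to give the uniform lower bound $h_{\LYZ f}(\theta)\geq c_{1}\|f\|_{L^{p}(\Rn)}$ for every $\theta\in\S$; hence $\LYZ f\supset c_{1}\|f\|_{L^{p}(\Rn)}\,B_{2}^{nm}$. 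Second, by Fubini and the rotation invariance of the spherical measure (exactly as in the proof of Theorem~\ref{t:relate}),
\[
\int_{\S}h_{\LYZ f}(\theta)^{p}\,d\theta = A_{Q}\,\|\nabla f\|_{L^{p}(\Rn)}^{\,p},
\]
for a positive constant $A_{Q}$; hence there is $\theta_{0}\in\S$ with $h_{\LYZ f}(\theta_{0})\geq c_{2}\|\nabla f\|_{L^{p}(\Rn)}$, and by central symmetry $[-c_{2}\|\nabla f\|_{L^{p}}\theta_{0},\,c_{2}\|\nabla f\|_{L^{p}}\theta_{0}]\subset\LYZ f$.

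By convexity, $\LYZ f$ contains the double cone with spherical base of radius $c_{1}\|f\|_{L^{p}(\Rn)}$ and apices at $\pm c_{2}\|\nabla f\|_{L^{p}(\Rn)}\theta_{0}$, whose volume is at least
\[
\frac{2\omega_{nm-1}}{nm}\,(c_{1}\|f\|_{L^{p}(\Rn)})^{\,nm-1}\,(c_{2}\|\nabla f\|_{L^{p}(\Rn)}).
\]
Feeding this bound back through Blaschke-Santal\'o and the identity for $\mathcal{E}_{p}(Q,f)$ produces the claimed inequality. The main obstacle is the first support-function bound: to obtain a constant truly independent of $\theta$, the origin-symmetry of $Q$ (supplying the inscribed ball $rB_{2}^{m}$), the boundedness of $\Omega$, and the one-dimensional Poincar\'e inequality must be combined carefully; this step is also where the dependence of $c_{0}$ on $\Omega$ (through $\mathrm{diam}(\Omega)$) enters, making it plausible that the resulting constant is far from optimal, consistent with the remark preceding the statement.
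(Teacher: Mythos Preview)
Your proposal is correct and follows essentially the same route as the paper: Blaschke--Santal\'o applied to the origin-symmetric body $\LYZ f$, a uniform lower bound $h_{\LYZ f}(\theta)\geq c_1\|f\|_{L^p}$ via the one-dimensional Poincar\'e inequality along lines (giving the inscribed ball), and an averaging argument over $\S$ (as in the proof of Theorem~\ref{t:relate}) to produce a long symmetric segment, yielding the double-cone volume bound. The only cosmetic difference is that you exploit $rB_2^m\subset Q$ directly to get $h_Q\geq r|\cdot|$, whereas the paper invokes the linearly-independent-directions setup from Lemma~\ref{lemma 8.15.1}; one minor imprecision (shared by the paper) is that $h_{\LYZ f}(\theta_0)\geq c_2\|\nabla f\|_{L^p}$ gives a point $x_0\in\LYZ f$ with $|x_0|\geq c_2\|\nabla f\|_{L^p}$ rather than $c_2\|\nabla f\|_{L^p}\theta_0\in\LYZ f$, but this does not affect the double-cone volume estimate.
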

\begin{proof}
For simplicity, since the constant $c_0$ in the desired inequality is not best, we will not try to keep track of its precise value, but rather use $c_0$ to denote a ``constant'' that depends on $n, p, Q, \Omega$ which might change from line to line.

Recall the body $\LYZP f$ from \eqref{eq:LYZ_body_2}. Since $Q$ is symmetric, $\LYZP f$ will be too. Arguing as in the proof of Lemma~\ref{lemma 8.15.1} to obtain equations \eqref{eq 8.15.4} and \eqref{eq 8.15.7}, for each $\theta \in \S$, we can find a  $u_{i_*} \in \s$, with $\theta u_{i_*} \neq 0$ and $c_0 >0$ such that 
\[
h_{\LYZ f}(\theta) \geq c_0 \left\|\nabla f(\cdot)^t\frac{\theta u_{i_*}}{|\theta u_{i_*}|}\right\|_{L^p(\Rn)}.
\]
Applying the sharp one-dimensional Poincar\'e inequality given in \cite[page 357]{Talenti1} to the above inequality, we may write 
\begin{equation}\label{eq step 1}
\begin{split}
h_{\LYZ f}(\theta) &\geq c_0 \left(\int_{\Rn}\left| \nabla f(z)^t\frac{\theta u_{i_*}}{|\theta u_{i_*}|}\right|^p dz\right)^{\frac{1}{p}}\\
&= c_0 \left(\int_{\left( \frac{\theta u_{i_*}}{|\theta u_{i_*}|}\right)^{\perp}} \int_{-\infty}^{\infty}\left|\frac{d}{dt} f\left(y+ t\frac{\theta u_{i_*}}{|\theta u_{i_*}|} \right)\right|^p dt dy\right)^{\frac{1}{p}}\\
&\geq c_0 \|f\|_{L^p(\Rn)} w\left(\Omega, \frac{\theta u_{i_*}}{|\theta u_{i_*}|}\right)^{-1}\\
&\geq c_0 \|f\|_{L^p(\Rn)}.
\end{split}
\end{equation}
where $w(\Omega, \xi)$ is the width of $\Omega$ in the direction of $\xi$ and the last line follows from the fact that $\Omega$ is compact.

The inequality \eqref{eq step 1} shows that 
\begin{equation}
    \LYZ f\supset c_0 \|f\|_{L^p(\Rn)}B_2^{nm}.
\end{equation}
On the other hand, observe that 
\begin{equation}\label{eq step 2}
\begin{split}
\max_{\theta \in \mathbb{S}^{nm-1}} h_{\LYZ f}(\theta)^p &= \max_{\theta \in \mathbb{S}^{nm-1}} \|h_Q(\nabla f(\cdot)^t\theta)\|_{L^p(\Rn)}^p\\
&\geq \frac{1}{nm \omega_{nm}} \int_{\mathbb{S}^{nm-1}} \int_{\Rn} h_Q(\nabla f(z)^t\theta)^p dz d\theta\\
&= \frac{1}{nm \omega_{nm}} \int_{\Rn} |\nabla f(z)|^p \int_{\mathbb{S}^{nm-1}} \int_{O(n)} h_Q\left(\left(T^t \frac{\nabla f(z)}{|\nabla f(z)|} \right)^t\theta \right)^p dT d\theta dz\\
&= \frac{1}{n^2m \omega_{nm} \omega_n} \int_{\mathbb{S}^{nm-1}} \int_{\mathbb{S}^{n-1}} h_Q(u^t\theta)^p du d\theta \|\nabla f\|_{L^p(\Rn)}^p\\
&= c_0 \|\nabla f\|_{L^p(\Rn)}^p.
\end{split}
\end{equation}
(Recall here $c_0$ might change from line to line---it is the existence of a positive constant that we are seeking for.)

Equation \eqref{eq step 2} and the fact that $\LYZ f$ is origin-symmetric imply that $\LYZ f$ contains the symmetric segment $\left[-c_0\|\nabla f\|_{L^p(\Rn)}, c_0\|\nabla f\|_{L^p(\Rn)}\right]$. 

Consequently we have that $\LYZ f$ contains the entire double cone 
\[
\text{conv}\left(c_0\|f\|_{L^p(\Rn)} B_2^{nm},\left[-c_0\|\nabla f\|_{L^p(\Rn)}, c_0\|\nabla f\|_{L^p(\Rn)}\right] \right),
\]
and therefore, 
\[
\vol[nm](\LYZ f) \geq c_0 \|f\|_{L^p(\Rn)}^{nm-1}\|\nabla f\|_{L^p(\Rn)}, 
\]
for some $c_0>0$.
Finally, an application of the Blaschke-Santal\'o inequality (see e.g. \cite{MP90}) yields 
\begin{align*}
\mathcal{E}_{p}(Q,f)&=d_{n,p}(Q)\left(\int_{\S} \left( \int_{\Rn} h_Q((\nabla f(z))^t\theta)^pdz \right)^{-\frac{nm} p } d\theta\right)^{-\frac 1 {nm}}\\
& = c_0 \Vol(\LYZP f)^{-\frac{1}{nm}}\geq  c_0 \vol[nm](\LYZ f)^{\frac{1}{nm}}\\
&\geq  c_0\|f\|_{L^p(\Rn)}^{\frac{nm-1}{nm}}\|\nabla f\|_{L^p(\Rn)}^{\frac{1}{nm}}.
\end{align*}
\end{proof}

As an immediate corollary of Theorem~\ref{t:BlSanPoin}, we obtain the $m$th-order Poincar\'e inequality, Theorem~\ref{t:Poincare}, by using an approximation argument and the classical $L^p$ Poincar\'e inequality.

Another corollary of Theorem~\ref{t:BlSanPoin} is the following embedding theorem.  Let $\Omega \subset \Rn$ be a bounded domain containing the origin in its interior, $m\in\N$, $Q\in\conbodio[m]$ be origin-symmetric, and $1 \leq p < \infty$. Consider the class of functions 
\[
\mathcal{B}_{Q,p}(\Omega) := \{f \in W_0^{1,p}(\Omega) \colon \mathcal{E}_{p}(Q,f) \leq 1\}. 
\]

\begin{corollary} For any bounded domain $\Omega \subset \Rn$ with Lipschitz boundary containing the origin in its interior, $m\in\N$, $Q\in\conbodio[m]$ that is origin-symmetric, and $1 \leq p < n$, the set $\mathcal{B}_{Q,p}(\Omega)$ is compactly immersed within $L^p(\Omega)$. 
\end{corollary}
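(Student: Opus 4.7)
The plan is to establish sequential compactness of $\mathcal{B}_{Q,p}(\Omega)$ in $L^p(\Omega)$ by reducing to the classical Rellich--Kondrachov theorem for $W_0^{1,p}(\Omega)$ (valid for $1\leq p<n$ on a bounded Lipschitz domain). All one needs is a uniform $W_0^{1,p}(\Omega)$-bound for sequences $\{f_k\}\subset\mathcal{B}_{Q,p}(\Omega)$, to be extracted from the constraint $\mathcal{E}_p(Q,f_k)\leq 1$ by combining Theorems~\ref{t:Poincare} and~\ref{t:BlSanPoin}.

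As a preliminary step I would upgrade Theorem~\ref{t:BlSanPoin} from $C^1$-functions compactly supported in $\Omega$ to the full space $W_0^{1,p}(\Omega)$. Both sides of \eqref{e:BlSanPoin} are continuous under $W^{1,p}(\Rn)$ convergence: the right-hand side obviously so, while for the left-hand side one combines Lemma~\ref{lemma 8.15.1} with the representation \eqref{eq:relation_E}. Since $C_c^\infty(\Omega)$ is dense in $W_0^{1,p}(\Omega)$ (extending functions by zero to $\Rn$), \eqref{e:BlSanPoin} persists for every $f\in W_0^{1,p}(\Omega)$.

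Now fix a sequence $\{f_k\}\subset \mathcal{B}_{Q,p}(\Omega)$. Theorem~\ref{t:Poincare} yields $\|f_k\|_{L^p(\Rn)}\leq C^{-1}\mathcal{E}_p(Q,f_k) \leq C^{-1}$, so the sequence is $L^p$-bounded and, passing to a subsequence, $\|f_k\|_{L^p(\Omega)}\to \alpha$ for some $\alpha\geq 0$. The argument then splits. If $\alpha=0$, then $f_k\to 0$ in $L^p(\Omega)$ and the subsequence is already convergent. If $\alpha>0$, then for $k$ large the extended Theorem~\ref{t:BlSanPoin} rearranges to
\[
\|\nabla f_k\|_{L^p(\Rn)} \leq \left(\frac{\mathcal{E}_p(Q,f_k)}{c_0 \|f_k\|_{L^p(\Rn)}^{(nm-1)/(nm)}}\right)^{nm} \leq \frac{1}{c_0^{nm}(\alpha/2)^{nm-1}},
\]
giving a uniform $W_0^{1,p}(\Omega)$-bound on the subsequence. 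Rellich--Kondrachov then produces a further subsequence convergent in $L^p(\Omega)$.

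The expected obstacle is the degenerate scenario $\alpha=0$, where Theorem~\ref{t:BlSanPoin} ceases to control $\|\nabla f_k\|_{L^p}$; the dichotomy sidesteps this by recognizing that $\|f_k\|_{L^p(\Omega)}\to 0$ \emph{is} the desired conclusion in that case. A secondary technicality is the continuity claim used to extend Theorem~\ref{t:BlSanPoin} to $W_0^{1,p}(\Omega)$, but this is a routine check using tools already assembled in Sections~\ref{sec:back} and~\ref{sec:main}.
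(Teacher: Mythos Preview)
Your proof is correct and follows essentially the same approach as the paper: a dichotomy based on whether $\|f_k\|_{L^p}$ stays bounded away from zero, with Theorem~\ref{t:BlSanPoin} supplying the $W_0^{1,p}$-bound in the nontrivial case and Rellich--Kondrachov finishing. Your version is in fact slightly more careful than the paper's, since you explicitly justify the extension of Theorem~\ref{t:BlSanPoin} to $W_0^{1,p}(\Omega)$ via density and Lemma~\ref{lemma 8.15.1}, and you invoke Theorem~\ref{t:Poincare} to secure the $L^p$-bound needed before passing to a subsequence.
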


\begin{proof} Consider any sequence of functions $f_k \in W^{1,p}_0(\Omega), k =1,\dots$, such that $\mathcal{E}_p(Q,f_k) \leq 1$. If there is some sub-sequence of $f_{k_j}$ that converges in the $L^p$-norm, then we are done. If this is not the case, then there is a positive constant $c$ for which $\|f_k\|_{L^p(\Omega)}\geq c$ for all $k \geq 1$. However, in this case, according to Theorem~\ref{t:BlSanPoin}, it is necessary that the sequence of $f_k$ is bounded in the $W^{1,p}_0(\Omega)$. Therefore, we may apply the Rellich-Kondarchov embedding theorem to conclude the result. 
\end{proof}

\bibliography{references.bib}
\bibliographystyle{acm}

\end{document}